\newtheorem{thm}{Theorem}[section]
\newtheorem{prop}[thm]{Proposition}
\newtheorem{cor}[thm]{Corollary}
\newtheorem{lem}[thm]{Lemma}
\theoremstyle{definition}
\newtheorem{dfn}[thm]{Definition}
\newtheorem{rmk}[thm]{Remark}
\numberwithin{equation}{section}
\newcommand{\cS}{\mathcal{S}}
\newcommand{\boldA}{\mathbf{A}}
\newcommand{\Cn}{C^{n-1}(\mathbb{R}) \cap C^{n}(\mathbb{R} \backslash \{ 0 \})}
\title[Weak type estimates for multiple operator integrals of divided differences]{Weak $(1,1)$ estimates for multiple operator integrals and generalized absolute value functions}
\date{\noindent \today.  MSC2010: 47B10, 47L20, 47H60.  MC is supported by the NWO Vidi grant `Non-commutative harmonic analysis and rigidity of operator algebras', VI.Vidi.192.018. FS is supported by the ARC Laureate Fellowship.}
\author[Martijn Caspers]{Martijn Caspers}
\address{TU Delft, EWI/DIAM,
	P.O.Box 5031,
	2600 GA Delft,
	The Netherlands}
\email{m.p.t.caspers@tudelft.nl}
\author[Fedor Sukochev]{Fedor Sukochev}
\author[Dmitriy Zanin]{Dmitriy Zanin}
\address{School of Mathematics and Statistics, UNSW, Kensington 2052, NSW, Australia}
\email{f.sukochev@unsw.edu.au}
\email{d.zanin@unsw.edu.au}
\begin{document}

\maketitle

\begin{abstract}
Consider the generalized absolute value function defined by
\[
a(t) = \vert t \vert t^{n-1}, \qquad t \in \mathbb{R}, n \in \mathbb{N}_{\geq 1}.
\]
Further, consider the $n$-th order divided difference function $a^{[n]}: \mathbb{R}^{n+1} \rightarrow \mathbb{C}$ and let $1 < p_1, \ldots, p_n < \infty$ be such that $\sum_{l=1}^n  p_l^{-1} = 1$. Let $\cS_{p_l}$ denote the Schatten-von Neumann  ideals and let $\cS_{1,\infty}$ denote the weak  trace class ideal.  We show that for any $(n+1)$-tuple $\boldA$ of bounded self-adjoint operators  the multiple operator integral $T_{a^{[n]}}^{\boldA}$ maps
$\cS_{p_1} \times \ldots \times \cS_{p_n}$ to $\cS_{1, \infty}$
boundedly with uniform bound in $\boldA$.  The same is true for the class of $C^{n+1}$-functions that outside the interval $[-1, 1]$ equal $a$. In \cite{CLPST} it was proved that for a function $f$ in this class such boundedness of $T^{\boldA}_{f^{[n]}}$ from $\cS_{p_1} \times \ldots \times \cS_{p_n}$ to $\cS_{1}$ may fail, resolving a problem by V. Peller. This   shows that the estimates in the current paper  are optimal.
The proof is based on a new reduction method for arbitrary multiple operator integrals of divided differences.

\end{abstract}

\section{Introduction}

This paper is concerned with the following problem. Consider a Borel function $f: \mathbb{R} \rightarrow \mathbb{C}$. Consider  the divided difference function $f^{[n]}: \mathbb{R}^{n+1} \rightarrow \mathbb{C}$ and assume it is bounded. For an $(n+1)$-tuple $\boldA = (A_0, \ldots, A_n)$ of bounded self-adjoint operators, consider the multiple operator integral
\begin{equation}\label{Eqn=S2}
T^{\boldA}_{f^{[n]}}: \cS_2 \times \ldots \times \cS_2 \rightarrow \cS_2.
\end{equation}
Here $\cS_2$ is the   Hilbert-Schmidt ideal and by \cite{CLPST} the map \eqref{Eqn=S2} is well-defined. We now ask for an extension of the multi-linear map \eqref{Eqn=S2} to  other Schatten $\cS_p$-spaces. Such extensions have several important applications to  differentiability properties of functions on non-commutative spaces, see e.g. \cite{PSS}, \cite{SkripkaTomskova}.

\vspace{0.3cm}

\noindent {\bf Problem 1.} For which class of functions $f: \mathbb{R} \rightarrow \mathbb{C}$ and which values of $1 \leq  p_1, \ldots, p_n < \infty$ with $\sum_{l=1}^n \frac{1}{p_l} = 1$ does $T^{\boldA}_{f^{[n]}}$ for every $\boldA \in B(H)_{sa}^{\times n+1}$ extend to a bounded map
\begin{equation}\label{Eqn=S1Intro}
T^{\boldA}_{f^{[n]}}: \cS_{p_1} \times \ldots \times \cS_{p_n} \rightarrow \cS_{1, \infty}.
\end{equation}



\vspace{0.3cm}

In case $n = 1$ a complete solution to this problem was found by D. Potapov and the authors in \cite{CPSZ-AJM}. In this case   \eqref{Eqn=S1Intro} concerns boundeness of
\begin{equation}\label{Eqn=IntroMore}
T^{\boldA}_{f^{[1]}}: \cS_{1}   \rightarrow \cS_{1, \infty}.
\end{equation}
The main result of \cite{CPSZ-AJM} yields that \eqref{Eqn=IntroMore} is bounded uniformly in $\boldA  \in B(H)_{sa}^{\times 2}$ if and only if $f$ is Lipschitz. Preliminary results on this problem have been obtained by Nazarov and Peller \cite{NazarovPeller} for rank 1 operators and by the authors \cite{CPSZ-JOT} for $f$ the absolute value map. Through interpolation \cite{CPSZ-AJM} (see also \cite{CSZ})  it implies the main results of \cite{PSActa} and \cite{CMPS} as well as many previous results on perturbation of commutators and non-commutative Lipschitz properties. In this sense the so-called weak type $(1,1)$ estimate  \eqref{Eqn=IntroMore} is the optimal one. Crucial in the proof of \cite{CPSZ-AJM} is the connection to non-commutative Calder\'on-Zygmund theory and the results by Parcet \cite{Parcet} and Cadilhac \cite{Cadilhac}.

\vspace{0.3cm}

That Problem 1 is the right question to pose is further witnessed by the fact that there is no   uniform bound in $\boldA \in B(H)_{sa}^{n+1}$ of the map
\begin{equation}\label{Eqn=IntroS1Fail}
T^{\boldA}_{f^{[n]}}: \cS_{p_1} \times \ldots \times \cS_{p_n} \rightarrow \cS_{1}.
\end{equation}
  For $n = 1$ counterexamples were (in different but related contexts) obtained by  Farforovskaya \cite{Far6}, \cite{Far7}, \cite{Far8}, Kato \cite{Kato} and Davies \cite{Davies}. Most notably Davies proves in \cite{Davies} that the estimate \eqref{Eqn=IntroS1Fail} fails for $n=1$ and  for the absolute value map $f.$ For $n \geq 2$ negative results were obtained much more recently in \cite{CLPST}. The functions  that are used in \cite{CLPST} to show failure of a uniform bound in $\boldA \in B(H)_{sa}^{n+1}$  of \eqref{Eqn=IntroS1Fail} are variations of a generalized (higher order) absolute value map
\begin{equation}\label{Eqn=AIntro}
a(t) = \vert t \vert t^{n-1}, \qquad t \in \mathbb{R}.
\end{equation}
This class of functions is exactly the object of study of the current paper, see the final Remark \ref{Rmk=Final}. Further negative results for $n \geq 2$ can be found in \cite{PSAdvances}.

\vspace{0.3cm}

The results so far naturally motivate a study of Problem 1 for $n \geq 2$. Moreover, affirmative answers to Problem 1 for classes of functions give optimal solutions to some of the main results  in \cite{PSS}
where it was proved that for any $f \in C^n(\mathbb{R})$ with bounded $n$-th order derivative $f^{(n)}$ we have
\[
T_{f^{[n]}}^{\boldA}: \cS_{p_1} \times \ldots \times \cS_{p_n} \rightarrow \cS_{p}, \qquad 1 < p, p_1, \ldots, p_n < \infty, \textrm{ with } \sum_{l=1}^n \frac{1}{p_l}= \frac{1}{p}.
\]
Despite its importance up until now for $n \geq 2$   nothing is known about the boundedness of \eqref{Eqn=S1Intro} for any class of functions $f$ unless already the stronger estimate \eqref{Eqn=IntroS1Fail} holds.  This paper is the first attempt to fill in a void in this area. Namely, we give an affirmative answer to Problem 1 for the generalized absolute value function $a$ as well as for a class of   related functions. Note that these examples are historically the most natural ones, since several results have been obtained in the past for absolute value maps. In particular we show that the class of counterexamples obtained in \cite{CLPST} to the estimate \eqref{Eqn=IntroS1Fail} does satisfy the weak $(1,1)$ estimate \eqref{Eqn=S1Intro}.  For other results on absolute value maps in this context we refer  to  \cite{McIntosh}, \cite{Kato}, \cite{Davies}, \cite{Kosaki}, \cite{DDPS1}, \cite{DDPS2}, \cite{Akooi},  \cite{CPSZ-JOT}, \cite{CLPST}, \cite{PSAdvances}.

\vspace{0.3cm}

 Here is our main theorem. We draw the reader's attention that our assumption on the indices $p_1,\cdots,p_n$ below are wider than those in \cite{PSS}.  This explains a serious difference between our method and that of \cite{PSS}.

\begin{thm}\label{Thm=MainIntro}
Let $n \in \mathbb{N}_{\geq 1}$ and  $1 \leq  p_1, \ldots, p_n < \infty$ with $\sum_{l=1}^n \frac{1}{p_l} = 1$. There exists a constant $D(p_1, \ldots, p_n) > 0$ such that for every $\boldA \in B(H)_{sa}^{\times n+1}$ we have:
\begin{equation}\label{Eqn=S1Intro2}
\Vert T^{\boldA}_{f^{[n]}}: \cS_{p_1} \times \ldots \times \cS_{p_n} \rightarrow \cS_{1, \infty} \Vert \leq D(p_1, \ldots, p_n),
\end{equation}
where $f = a$ as defined in \eqref{Eqn=AIntro}.
Moreover, the same result holds for any function $f \in C^{n+1}(\mathbb{R})$ such that $f(t) = a(t)$ for $t \in \mathbb{R} \backslash [-1,1]$.
\end{thm}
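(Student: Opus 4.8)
The plan is to reduce the statement about the generalized absolute value function $a(t)=|t|t^{n-1}$ to a weak-type estimate for a much simpler multiple operator integral, namely the one attached to the $n$-th divided difference of the single absolute value map $t\mapsto |t|$, together with a perturbative argument handling the $C^{n+1}$-modifications on $[-1,1]$. First I would record the algebraic identity for divided differences of products: since $a(t)=|t|\cdot t^{n-1}$, the function $a^{[n]}$ can be written via the Leibniz rule for divided differences as a finite sum of terms of the form $(\,|\cdot|\,)^{[k]}(t_{i_0},\dots,t_{i_k})\cdot p(t_{i_0},\dots,t_{i_n})$ where $p$ is a monomial in the remaining variables of total degree $n-1-k$. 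Each such term factors $T_{a^{[n]}}^{\boldA}$ through a composition of the MOI for $(\,|\cdot|\,)^{[k]}$ with multiplications by bounded operators $A_j^m$; since the operators are bounded self-adjoint these multiplications are bounded on every $\cS_p$, so it suffices to control $T_{(\,|\cdot|\,)^{[k]}}^{\boldA}$ from $\cS_{q_1}\times\cdots\times\cS_{q_k}$ to $\cS_{1,\infty}$ with $\sum q_l^{-1}=1$, uniformly in $\boldA$. In other words: the "reduction method for arbitrary multiple operator integrals of divided differences" alluded to in the abstract should pull out the polynomial part and leave only the pure absolute-value divided difference.

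Next I would set up the weak-type estimate for $T_{(\,|\cdot|\,)^{[n]}}^{\boldA}$. The natural route, generalizing $n=1$ in \cite{CPSZ-JOT} and \cite{CPSZ-AJM}, is to use the integral representation $|t|=\frac{2}{\pi}\int_0^\infty \frac{t^2}{t^2+\lambda^2}\,\frac{d\lambda}{1}$ (up to normalization), differentiate under the integral to get a representation of $(\,|\cdot|\,)^{[n]}$ as an integral over $\lambda>0$ of divided differences of the resolvent-type rational functions $r_\lambda(t)=t^2/(t^2+\lambda^2)$, and then use that for rational functions the MOI $T_{r_\lambda^{[n]}}^{\boldA}$ decomposes explicitly into finite products of bounded operators and resolvents $(A_j^2+\lambda^2)^{-1}$. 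One obtains a Schur-type / Birman–Solomyak representation of $T_{(\,|\cdot|\,)^{[n]}}^{\boldA}(X_1,\dots,X_n)$ as $\int_0^\infty (\text{product of resolvents and }X_l)\,d\mu(\lambda)$, and the task becomes a weak-$(1,1)$ bound for this operator-valued singular integral. For this I would invoke the non-commutative Calderón–Zygmund machinery of Parcet \cite{Parcet} and Cadilhac \cite{Cadilhac} exactly as in \cite{CPSZ-AJM}: verify an $L_2\times\cdots\times L_2\to L_2$ (i.e.\ $\cS_2$) bound — which is guaranteed by \eqref{Eqn=S2} — and a Hörmander-type kernel cancellation condition for the multilinear kernel in the $\lambda$-variable, from which the weak-type $(1,1)$ endpoint follows; multilinear real interpolation between the diagonal $\cS_2$ estimate and this endpoint then yields \eqref{Eqn=S1Intro2} for all admissible $(p_1,\dots,p_n)$.

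Finally, for the "moreover" clause: if $f\in C^{n+1}(\mathbb{R})$ and $f=a$ on $\mathbb{R}\setminus[-1,1]$, write $g=f-a\cdot\chi$ where $\chi$ is a smooth cutoff equal to $1$ on $[-1,1]$; then $g$ is compactly supported and $C^{n+1}$ up to a genuine question of smoothness at $\pm1$ — better to argue directly that $h:=f-a$ is $C^{n+1}$ away from $\pm1$, bounded, and supported in $[-1,1]$, and has bounded $(n+1)$-st derivative in a neighbourhood of $[-1,1]$. For such $h$ the divided difference $h^{[n]}$ lies in the integral projective tensor product (an appropriate Besov-type class suffices), so by the classical results recalled in the introduction from \cite{PSS} the operator $T_{h^{[n]}}^{\boldA}$ is already bounded $\cS_{p_1}\times\cdots\times\cS_{p_n}\to\cS_1\hookrightarrow\cS_{1,\infty}$ uniformly in $\boldA$; adding this to the estimate for $T_{a^{[n]}}^{\boldA}$ gives the claim. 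The main obstacle is the second step — establishing the Hörmander kernel cancellation condition for the multilinear $\lambda$-kernel built from products of $n$ resolvents: one must control, after the pseudo-localization / Calderón–Zygmund reduction, the difference of the kernel at two nearby points against the $\cS_1$ norm uniformly over all tuples $\boldA$, and the combinatorial bookkeeping of the $n$-fold resolvent products (and the fact that $|t|$ is only Lipschitz, so the $\lambda$-integral is genuinely singular) is where the real work and the need for the "new reduction method" lie.
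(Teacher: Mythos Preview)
Your Leibniz-rule reduction has a fatal gap. Writing $a(t)=|t|\cdot t^{n-1}$ and expanding $a^{[n]}$ by the product rule produces terms involving $(|\cdot|)^{[k]}(t_0,\ldots,t_k)$ for $k=1,\ldots,n$, but for $k\geq 2$ these symbols are \emph{unbounded}: e.g.\ $(|\cdot|)^{[2]}(\epsilon,-\epsilon,0)=1/\epsilon$. Hence $T_{(|\cdot|)^{[k]}}^{\boldA}$ is not even defined as a bounded map $\cS_2^{\times k}\to\cS_2$, and your claim ``it suffices to control $T_{(|\cdot|)^{[k]}}^{\boldA}$'' is vacuous. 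The product $(|\cdot|)^{[k]}(t_0,\ldots,t_k)\cdot((\cdot)^{n-1})^{[n-k]}(t_k,\ldots,t_n)$ is bounded only because of cancellation between the two (individually unbounded) factors, so you cannot separate them at the MOI level. This kills the whole first step; the subsequent Calder\'on--Zygmund programme is never reached (and, as the paper states explicitly in the introduction, CZ theory is \emph{not} the route taken for $n\geq 2$).

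The paper's argument is entirely different and much more elementary. The key observation is that on each orthant $\epsilon\cdot\mathbb{R}_{\geq 0}^{n+1}$ one has $a(t)=\epsilon t^n$, so $a^{[n]}\equiv \epsilon\, n!$ is \emph{constant} there; consequently, for $A\geq 0$ (or $A\leq 0$) the operator $T_{a^{[n]}}^{(A,\ldots,A)}$ is trivially bounded into $\cS_1$. The work goes into a general reduction theorem (Theorem~\ref{Thm=Ultimate}) showing that for any $f\in\Cn$ the uniform weak-type bound for $T_{f^{[n]}}^{\boldA}$ follows from such one-sided bounds for $f$ and its iterates $f_k(t)=f_{k-1}^{[1]}(t,0)$. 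That reduction is proved by a sign-splitting of $A$: whenever two consecutive variables $t_k,t_{k+1}$ have opposite sign, Lemma~\ref{lem=crucial} rewrites $f^{[n]}$ as $\rho(t_k,t_{k+1})f_1^{[n-1]}(\ldots)+\psi(t_k,t_{k+1})f_1^{[n-1]}(\ldots)$ with $\rho(s)=|s_0|/(|s_0|+|s_1|)$, and the only non-elementary analytic input is the weak $(1,1)$ bound for the double operator integral $T_\rho^{A,A}$ from \cite{CPSZ-JOT}. No resolvent representation and no multilinear CZ kernel estimate are needed.

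Your ``moreover'' argument also has a gap: $h=f-a$ is \emph{not} $C^{n+1}$ in a neighbourhood of $0$ (since $a$ is only $C^{n-1}$ there), so you cannot invoke the compact-support $C^{n+1}$ results of \cite{PSS} directly. The paper circumvents this by again passing to a single sign: on $\epsilon\mathbb{R}_{\geq 0}$ one has $h(t)=f(t)-\epsilon t^n$, which \emph{is} the restriction of a genuine compactly supported $C^{n+1}$ function, and the reduction theorem then closes the argument.
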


Note that condition $\sum_{l=1}^n \frac{1}{p_l} = 1$ implies that $n=1$ if and only if for some $1 \leq k \leq n$ we have  $p_k = 1$. Further, if $n= 1$ then Theorem \ref{Thm=MainIntro} is the main result of \cite{CPSZ-JOT}. Therefore this paper mainly deals with the case $n \geq 2$ and $p_k >1$ for all $1 \leq k \leq n$.

Let us comment on the proof. In contrast to \cite{CPSZ-AJM}, which covers the case $n=1$, we do not rely on Calder\'on-Zygmund theory but rather rely on the key results from \cite{CPSZ-JOT} together with  a new reduction technique. Theorem \ref{Thm=Ultimate} shows that the problem of finding weak type $(1,1)$ estimates of a double operator integral of divided differences is concentrated on the case that $\boldA = (A, \ldots, A)$ with either $A \geq 0$ or $A \leq 0$.   To prove this we use reductions from multiple operator integrals to double operator integrals.

\vspace{0.3cm}

\noindent {\it Structure.} Section \ref{Sect=Prelim} settles all notation and preliminaries on divided differences and multiple operator integrals. Section \ref{Sect=ReductionFormula} introduces several reduction techniques for multiple operator integrals. Of crucial importance is Lemma \ref{lem=crucial}.  Section \ref{Sect=MainTheorem} proves a reduction theorem which is fundamental to our paper. Then in Section \ref{Sect=Consequences} we present the main results. In particular we prove Theorem \ref{Thm=MainIntro}.

\section{Preliminaries}\label{Sect=Prelim}
Throughout the entire paper $n$ is a fixed number in $\mathbb{N}_{\geq 1}$.
For sets $B_0 \subseteq B_1$ we write $B_1 \backslash B_0$ for the set of all elements in $B_1$ that are not in $B_0$.
We write $\chi_G$ for the indicator function of a set $G \subseteq \mathbb{R}^{n+1}$ and $\chi_0$ when $G = \{ 0 \}$.
For $p \in [1, \infty]$ we denote the conjugate exponent by  $p' \in [1, \infty]$ which is defined by $\frac{1}{p}+\frac{1}{p'} =1$.

For normed spaces $X$ and $Y$ we denote $X \times Y$ for the  Cartesian product equipped with the max norm $\Vert (x, y) \Vert = \max (\Vert x \Vert, \Vert y \Vert)$.

Inner products are linear in the first argument and anti-linear in the second one.
In this paper $H$ is an infinite dimensional separable Hilbert space, $B(H)$ is the algebra of all bounded operators on $H$ and $B(H)_{sa}$ stands for the set of all bounded self-adjoint operators. Note that the separability of $H$ is used in \cite{CLS}. We write ${\rm Tr}$ for the trace on $B(H)$.   For $A \in B(H)_{sa}$ we denote $\sigma(A)$ for the spectrum of $A$ and $\sigma_p(A)$ for the point spectrum of $A$. So $\sigma_p(A)$ consists of all eigenvalues of $A$. Let $E_A$ be the spectral measure of $A$ (see \cite{Rudin}). A scalar valued spectral measure of $A$ is   a positive scalar valued finite measure $\lambda_A$ on the Borel sets of $\sigma(A)$ having the same sets of measure 0 as $E_A$. As observed in \cite[Preliminaries]{CLS} such a measure always exists and the constructions below are independent of the choice of $\lambda_A$. See also \cite[Section IX.8]{Conway}.

\subsection{Schatten spaces $\cS_p$ and  operator ideals}  For addditional information concerning material reviewed in this subsection, we refer to \cite{LSZ}. We let $\cS_p(H), 1 \leq p  < \infty$ be the Schatten-von Neumann non-commutative $L_p$-spaces associated with $B(H)$. We simply write $\cS_p$ for $\cS_p(H)$ and omit $H$ in the notation.
$\cS_p$ is the Banach space consisting of all $x \in B(H)$ such that $\Vert x \Vert_p := {\rm Tr}(\vert x \vert^p)^{1/p} < \infty$.
 $\cS_\infty$ denotes the compact operators. The H\"older inequality holds $\Vert xy \Vert_p \leq \Vert x \Vert_q \Vert y \Vert_r$ whenever $x \in \cS_q, y \in \cS_r$ and $p^{-1} = q^{-1} + r^{-1}$.

For $x \in B(H)$ we set the singular value sequence
\[
\mu_k(x) = \inf \{  \Vert x (1-p) \Vert \mid  p \in B(H) \textrm{  projection},    {\rm Tr}(p)  \leq  k \},  \qquad k \in \mathbb{N}_{\geq 0}.
\]
We let $\cS_{1, \infty}$ be the space of $x \in B(H)$ for which $(\mu_k(x))_{k \in \mathbb{N}_{\geq 0}}$ is in $\ell_{1, \infty}$, e.g.
\[
\Vert x \Vert_{1, \infty} := \sup_{k \in \mathbb{N}_{\geq 0}} (k +1)\mu_k(x)  < \infty.
\]
Then $\cS_{1, \infty}$ is a quasi-Banach space with quasi-triangle inequality
\[
\Vert x + y \Vert_{1, \infty} \leq 2 \Vert x \Vert_{1, \infty} +  2 \Vert y \Vert_{1, \infty}, \qquad x,y \in \cS_{1, \infty}.
\]
Further $\cS_1 \subseteq \cS_{1, \infty} \subseteq \cS_p, 1 < p \leq \infty$.

\subsection{Multiple operator integrals}\label{Sect=MOI} Fix a separable Hilbert space $H$.  Let $\boldA = (A_0, \ldots, A_n)$ be an $(n+1)$-tuple of self-adjoint operators $A_i \in B(H)$. We shall write this as $\boldA \in B(H)_{sa}^{\times n+1}$.  Let $\lambda_{A_i}$ be the scalar valued spectral measure of $A_i$. For  functions $\phi_i \in L_\infty( \sigma(A_i), \lambda_{A_i}), i = 0, \ldots, n$ set $\phi = \phi_0 \otimes \ldots \otimes \phi_n$ and   define:
\[
T_\phi^{\boldA}: \overbrace{\cS_2 \times \ldots \times \cS_2}^{n \textrm{ copies}} \rightarrow \cS_2: (x_0, \ldots, x_n) \mapsto \phi_0(A_0) x_1 \phi_1(A_1) x_1 \ldots \phi_{n-1}(A_{n-1}) x_n  \phi_{n}(A_{n}).
\]
We have
\[
L_\infty( \sigma(A_0), \lambda_{A_0}) \otimes \ldots \otimes L_\infty( \sigma(A_n), \lambda_{A_n}) \subseteq L_\infty( \sigma(A_0) \times \ldots \times \sigma(A_n) , \lambda_{A_0} \times \ldots \times \lambda_{A_{n}}),
\]
by identifying $(\phi_0 \otimes \ldots \otimes \phi_n)(t_0, \ldots, t_n) = \phi_0(t_0)  \ldots  \phi_n(t_n), t_i \in \sigma(A_i)$. Next, the space $L_\infty( \sigma(A_0) \times \ldots \times \sigma(A_n) , \lambda_{A_0} \times \ldots \times \lambda_{A_{n}})$ is equipped with the weak-$\ast$-topology induced by the predual $L_1( \sigma(A_0) \times \ldots \times \sigma(A_n) , \lambda_{A_0} \times \ldots \times \lambda_{A_{n}})$ and the linear span of the  elementary tensor products   is  weak-$\ast$ dense in this space. In \cite{CLS} it is explained that also the space of bounded multi-linear maps  $\cS_2 \times \ldots \times \cS_2 \rightarrow \cS_2$ is canonically a dual space and therefore carries the weak-$\ast$ topology. More precisely,
for $\xi_0 \otimes \ldots \otimes \xi_{n} \otimes \eta \in \cS_2 \widehat{\otimes} \ldots \widehat{\otimes} \cS_2$ ($n+2$ projective tensor products) define the multi-linear map
\[
\overbrace{\cS_2 \times \ldots \times \cS_2}^{n +1 \textrm{ copies}} \rightarrow \cS_2:  \mu_0 \otimes \ldots  \otimes \mu_n \mapsto \langle \mu_0, \xi_0^\ast \rangle \cdots \langle \mu_n, \xi_n^\ast \rangle \eta.
\]
By \cite{CLS} this isomorphism complex linearly identifies the space of multi-linear maps $\cS_2 \times \ldots \times \cS_2 \rightarrow \cS_2$ as the dual of $\cS_2 \widehat{\otimes} \ldots \widehat{\otimes} \cS_2$ ($n+2$ tensors). This isomorphism defines the weak-$\ast$-topology on  $\cS_2 \times \ldots \times \cS_2 \rightarrow \cS_2$.

By \cite[Proposition 5]{CLS} (see also \cite[Section 4.2]{SkripkaTomskova} and \cite{Pavlov}, \cite{SolomyakStenkin}) the assignment $\phi \mapsto T_\phi^{\boldA}$ extends uniquely to a linear weak-$\ast$ continuous map:
\[
L_\infty( \sigma(A_0) \times \ldots \times \sigma(A_n) , \lambda_{A_0} \times \ldots \times \lambda_{A_{n}}) \rightarrow B(\cS_2 \times \ldots \times \cS_2, \cS_2).
\]
This defines $T_\phi^{\boldA}$ for $\phi$ in this domain.
By  \cite[Remarks after Corollary 10]{CLS} we have for such $\phi$ that
\[
\Vert \phi \Vert_\infty = \Vert T_\phi^{\boldA}:  \cS_2 \times \ldots \times \cS_2  \rightarrow \cS_2 \Vert.
\]
 Note that if $\phi: \mathbb{R}^{n+1} \rightarrow \mathbb{C}$ is a bounded Borel function then it defines an equivalence class $[\phi]$ in $L_\infty( \sigma(A_0) \times \ldots \times \sigma(A_n) , \lambda_{A_0} \times \ldots \times \lambda_{A_{n}})$ and we keep denoting $T_\phi^\boldA$ for $T_{[\phi]}^{\boldA}$.

Let $\phi: \mathbb{R}^{n+1} \rightarrow \mathbb{C}$ be a bounded Borel function and $\boldA \in B(H)_{sa}^{\times n+1}$. In this paper we shall be interested in extensions of $T_\phi^{\boldA}$ to various Schatten classes. Let $1 \leq p, p_1, \ldots, p_n < \infty$. We denote
\begin{equation}\label{Eqn=NormSp}
\Vert T_\phi^{\boldA}: \cS_{p_1} \times \ldots \times \cS_{p_n} \rightarrow \cS_p \Vert
\end{equation}
 for the infimum of all constants $C>0$ such that for every $(x_1, \ldots, x_n) \in (\cS_2 \cap \cS_{p_1}) \times \ldots \times (\cS_2 \cap \cS_{p_n})$   we have $T_\phi^{\boldA}(x_1, \ldots, x_n) \in \cS_p$ and moreover,
\[
\Vert T_\phi^{\boldA}(x_1, \ldots, x_n) \Vert_{\cS_p} \leq C \prod_{l=1, \ldots, n} \Vert x_l \Vert_{\cS_{p_l}}.
\]
In case \eqref{Eqn=NormSp} is finite    $T_\phi^\boldA$ extends to a bounded map $\cS_{p_1} \times \ldots \times \cS_{p_n}$ to $\cS_p$ still denoted by $T_\phi^\boldA$. Analogously we can replace the target space $\cS_p$ by $\cS_{1, \infty}$ in this terminology. We shall also say that
$ T_\phi^{\boldA}$ is bounded from $\cS_{p_1} \times \ldots \times \cS_{p_n}$ to $\cS_p$ or $\cS_{1, \infty}$.

\begin{lem}\label{Lem=Indicator}
Let  $\phi: \mathbb{R}^{n+1} \rightarrow \mathbb{C}$ be a bounded Borel function. Let $\chi_{+}$ (resp. $\chi_{-}$)  be the indicator function on $\mathbb{R}_{\geq 0}^{\times n+1}$ (resp. $-\mathbb{R}_{\geq 0}^{\times n+1}$).   Let $A \in B(H)_{sa}$.
\begin{enumerate}
\item\label{Item=One} If $A \geq 0$ we have
$T_{\phi}^{(A, \ldots, A)} =  T_{\phi \chi_+  }^{(A, \ldots, A)}$ and if $A \leq 0$ we have $T_{\phi}^{(A, \ldots, A)} =  T_{\phi \chi_-   }^{(A, \ldots, A)}$.
\item\label{Item=Two} $T_{\chi_{\pm}}^{(A, \ldots, A)}: \cS_{p_1} \times \ldots \times \cS_{p_n} \rightarrow \cS_1$ is a contraction for every $1 \leq p_1, \ldots, p_n < \infty$ with $\sum_{l=1}^n \frac{1}{p_l} = 1$.
\end{enumerate}
\end{lem}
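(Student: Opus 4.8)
The plan is to reduce everything to the spectral picture of a single positive (or negative) operator $A$. For item \eqref{Item=One}, fix $A \geq 0$, so $\sigma(A) \subseteq \mathbb{R}_{\geq 0}$ and the scalar spectral measure $\lambda_A$ is supported in $\mathbb{R}_{\geq 0}$. Hence $\lambda_A \times \cdots \times \lambda_A$ is supported in $\mathbb{R}_{\geq 0}^{\times n+1}$, so as elements of $L_\infty(\sigma(A)^{\times n+1}, \lambda_A^{\times n+1})$ we have $[\phi] = [\phi \chi_+]$. Since $T_\psi^{(A,\ldots,A)}$ depends only on the class $[\psi]$ in this $L_\infty$-space (this is exactly how the multiple operator integral was defined via the weak-$\ast$ continuous extension in Section \ref{Sect=MOI}), we get $T_\phi^{(A,\ldots,A)} = T_{\phi\chi_+}^{(A,\ldots,A)}$ immediately. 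The case $A \leq 0$ is symmetric, using that then $\lambda_A^{\times n+1}$ is supported in $-\mathbb{R}_{\geq 0}^{\times n+1}$.

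For item \eqref{Item=Two}, I would first observe that it suffices to treat $\chi_+$ with $A \geq 0$: the case $\chi_-$ with general $A$ follows by replacing $A$ with $-A$ (which swaps $\chi_+$ and $\chi_-$ up to the sign flip in each coordinate), and for $A$ neither positive nor negative we can still use that $\chi_+(t_0,\ldots,t_n) = \prod_{i} \chi_{[0,\infty)}(t_i)$ splits as an elementary tensor, so $T_{\chi_+}^{(A,\ldots,A)}$ factors through the compression by the spectral projection $P = E_A([0,\infty))$; concretely $T_{\chi_+}^{(A,\ldots,A)}(x_1,\ldots,x_n) = P x_1 P x_2 \cdots P x_n P$. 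The point is that $P$ is a contraction on every $\cS_{p_l}$. Then the Hölder inequality for Schatten norms, $\Vert y_1 y_2 \cdots y_n \Vert_1 \leq \prod_l \Vert y_l \Vert_{p_l}$ whenever $\sum_l p_l^{-1} = 1$, applied to $y_l = P x_l$ (with the trailing $P$ absorbed, using $\Vert P x_n P\Vert_{p_n} \le \Vert x_n\Vert_{p_n}$), gives
\[
\Vert P x_1 P x_2 \cdots P x_n P \Vert_1 \leq \prod_{l=1}^n \Vert P x_l \Vert_{p_l} \leq \prod_{l=1}^n \Vert x_l \Vert_{p_l},
\]
which is precisely the asserted contractivity. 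The same computation with $P = E_A((-\infty,0])$ handles $\chi_-$.

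I do not anticipate a serious obstacle here; the only point requiring a little care is the identity $T_{\chi_+}^{(A,\ldots,A)}(x_1,\ldots,x_n) = E_A([0,\infty)) x_1 \cdots x_n E_A([0,\infty))$, which one should justify by noting that $\chi_+ = \chi_{[0,\infty)} \otimes \cdots \otimes \chi_{[0,\infty)}$ is a genuine elementary tensor of bounded Borel functions, so the multiple operator integral evaluates on it by the original elementary formula $T_{\phi_0 \otimes \cdots \otimes \phi_n}^{\boldA}(x_1,\ldots,x_n) = \phi_0(A_0) x_1 \phi_1(A_1) \cdots x_n \phi_n(A_n)$ from Section \ref{Sect=MOI}, with each $\phi_i(A) = \chi_{[0,\infty)}(A) = E_A([0,\infty))$. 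One small subtlety: when $A$ has a kernel or a spectral atom at $0$, the choice of $\chi_{[0,\infty)}$ versus $\chi_{(0,\infty)}$ changes $P$, but this does not affect contractivity since either choice is still an orthogonal projection; and in the positive case $A \geq 0$ combined with item \eqref{Item=One} the relevant $\phi \chi_+$ already restricts to $\sigma(A) \subseteq [0,\infty)$, so the two choices agree $\lambda_A$-a.e. on the support.
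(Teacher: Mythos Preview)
Your proposal is correct. For part \eqref{Item=Two} it coincides with the paper's proof: both observe that $\chi_\pm$ is an elementary tensor, compute $T_{\chi_\pm}^{(A,\ldots,A)}(x_1,\ldots,x_n)=Px_1P\cdots Px_nP$ with $P=\chi_{[0,\infty)}(A)$ (resp.\ $\chi_{(-\infty,0]}(A)$), and apply the H\"older inequality. For part \eqref{Item=One} you take a slightly more direct route than the paper: you note that when $A\geq 0$ the product measure $\lambda_A^{\times n+1}$ is supported in $\mathbb{R}_{\geq 0}^{\times n+1}$, so $[\phi]=[\phi\chi_+]$ in $L_\infty(\sigma(A)^{\times n+1},\lambda_A^{\times n+1})$ and the equality of multiple operator integrals is immediate; the paper instead verifies the identity on elementary tensors $\phi_0\otimes\cdots\otimes\phi_n$ (using $P=1$ when $A\geq 0$) and then extends by weak-$\ast$ density. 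Both arguments are equally elementary; yours has the small advantage of avoiding the density step altogether.
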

\begin{proof}
Set the projection $P:= \chi_{[0, \infty)}(A)$.
\eqref{Item=One}
First assume that  $\phi = \phi_0 \otimes \ldots \otimes \phi_n \in L_\infty(\sigma(A_0), \lambda_{A_0} ) \otimes \ldots \otimes  L_\infty(\sigma(A_n), \lambda_{A_n} )$ is an elementary tensor product. If $A \geq 0,$ then   $P = 1$.  Then  for $x_i \in \cS_2$,
\[
\begin{split}
& T_{\phi}^{(A, \ldots, A)}(x_1, \ldots, x_n)
=
\phi_0(A) x_1 \phi_1(A)   \ldots \phi_{n-1}(A) x_n \phi_n(A) \\
= & \phi_0(A) P x_1 \phi_1(A) P    \ldots \phi_{n-1}(A)  P x_n \phi_n(A) P
=  T_{\phi \chi_{ + }}^{(A, \ldots, A)}(x_1, \ldots, x_n).
\end{split}
\]
By weak-$\ast$ density of the linear span of elementary products we conclude the lemma for every bounded Borel function $\phi: \mathbb{R}^{n+1} \rightarrow \mathbb{C}$. The statement for $A \leq 0$ follows similarly.
\eqref{Item=Two} We find for $x_l \in \cS_{p_l} \cap \cS_2$ that
\[
T_{\chi_+}^{(A, \ldots,  A)}(x_1, \ldots, x_n) =  P x_1  P   \ldots  P x_n  P,
\]
 which  defines a contraction by the H\"older inequality. The case for $\chi_-$ is treated similarly.
\end{proof}

\begin{rmk}
Let $\phi: \mathbb{R}^{n+1} \rightarrow \mathbb{C}$ be a bounded Borel function and let
\[
\phi_m  = \sum_{(l_0, \ldots, l_n) \in \mathbb{Z}^{n+1}}  \phi\left(   \frac{l_0}{m}  , \ldots,    \frac{l_n}{m}   \right) \chi_{ \prod_{i=0}^n [\frac{l_i}{m}, \frac{l_i+1}{m} ) }.
\]
Let $A \in B(H)_{sa}$  with spectral measure $E$ and set $E_{l,m} = E( [\frac{l}{m},  \frac{l+1}{m} ) )$.  Let $1 \leq p, p_1, \ldots, p_n < \infty$ be such that $\frac{1}{p} = \sum_{k=1}^n \frac{1}{p_k}$.  Then for $x_k \in \cS_2 \cap \cS_{p_k}$ we have
\begin{equation}\label{Eqn=TApprox}
T^{(A, \ldots, A)}_{\phi_m}(x_1, \ldots, x_n) =  \sum_{(l_0, \ldots, l_n) \in \mathbb{Z}^{n+1}}  \phi\left(   \frac{l_0}{m}  , \ldots,    \frac{l_n}{m}   \right) E_{l_0, m } x_1 E_{l_1, m} x_2 \ldots E_{l_{n-1}, m } x_n E_{l_n, m}.
\end{equation}

Assume that $\phi_m \rightarrow \phi$ pointwise (which holds true in particular if $\phi$ is continuous on $\mathbb{R}^{n+1} \backslash \{ 0 \}$). Then by the Lebesgue dominated convergence theorem $\phi_m \rightarrow \phi$ in the weak-$\ast$-topology of $L_\infty(\sigma(A), \lambda_A)^{\otimes n+1}$.
By weak-$\ast$-continuity we have a convergence in $\cS_2$, {\color{red} please, put an explanation here}
\[
T^{(A, \ldots, A)}_{\phi_m}(x_1, \ldots, x_n) \rightarrow T^{(A, \ldots, A)}_{\phi}(x_1, \ldots, x_n).
\]

On the other hand   assume that $\phi$ is in the class $\mathfrak{C}_n$ from \cite[Eqn. (3.1)]{PSS}. If the maps $T^{(A, \ldots, A)}_{\phi_m}$ are bounded $\cS_{p_1} \times \ldots \cS_{p_n} \rightarrow \cS_{p}$ uniformly in $m$ and if \eqref{Eqn=TApprox} converges in $\cS_p$ for every $x_i \in \cS_2 \cap \cS_{p_i}$, then we see that the limiting operator equals the one from \cite[Definition 3.1]{PSS}. In particular this applies to the class of polynomial integral momenta \cite[Theorem 5.3]{PSS} (where $1 < p < \infty$) and the multiple operator integrals appearing in Proposition \ref{Prop=EasyClass} below.

We conclude that  the multiple operator integrals that occur in this paper coincide with the ones defined in \cite[Definition 3.1]{PSS}. Further,  it was already observed in \cite[p. 510]{PSS} that these multiple operator integrals also agree with Peller's definition and approach \cite{PellerJFA} (see also \cite{Azamov}).
\end{rmk}

\subsection{Reduction to the case $\boldA = A^{\times n+1}$}
Let $H$ be a Hilbert space.
Consider $H^{(n+1)} = H \oplus \ldots \oplus H$ the $(n+1)$-fold direct sum and identify $B(H^{(n+1)}) \simeq M_{n+1}(\mathbb{C}) \otimes B(H)$ and $\cS_p(H^{(n+1)}) \simeq \cS_p(\mathbb{C}^{n+1}) \otimes \cS_p(H), 1 \leq p \leq \infty$. Let $E_{i,j} \in M_{n+1}(\mathbb{C})$ denote the matrix unit with zero entries everywhere except for a 1 at the $i$-th row and $j$-th column. We have the following matrix amplification trick.

\begin{prop}\label{Prop=Amplification}
Let $\phi: \mathbb{R}^{n+1} \rightarrow \mathbb{C}$ be a bounded Borel function. Let $\boldA = (A_0, \ldots, A_n) \in B(H)_{sa}^{\times n+1}$ and let $x_1, \ldots, x_n \in \cS_2$. Define elements of $M_{n+1}(\mathbb{C}) \otimes B(H)$ by
\[
\overrightarrow{A} = \sum_{l=0}^n E_{l,l} \otimes A_l, \qquad \textrm{ and } \qquad z_l = E_{l-1, l} \otimes x_l, \quad l = 1, \ldots, n.
\]
Note that $z_l \in \cS_2(\mathbb{C}^{n+1}) \otimes \cS_2(H)$ with $\Vert z_l \Vert_2 = \Vert x_l \Vert_2$.
Further set the $(n+1)$-tuple $\overrightarrow{\boldA} = (\overrightarrow{A}, \ldots, \overrightarrow{A})$.
Then,
\begin{equation}\label{Eqn=Amplification}
T^{\overrightarrow{\boldA}}_{\phi}(z_1, \ldots, z_n ) = E_{0,n} \otimes T^{\boldA}_{\phi}(x_1, \ldots, x_n).
\end{equation}
\end{prop}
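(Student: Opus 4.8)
The statement reflects only the block–diagonal structure of $\overrightarrow{A}$, so the plan is to verify \eqref{Eqn=Amplification} first for elementary tensors by an explicit computation with matrix units, and then to extend it to all bounded Borel $\phi$ by weak-$\ast$ density and weak-$\ast$ continuity of the multiple operator integral.

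\emph{Elementary tensors.} Suppose $\phi=\phi_0\otimes\cdots\otimes\phi_n$ with $\phi_j\in L_\infty(\sigma(\overrightarrow{A}),\lambda_{\overrightarrow{A}})$. Since $\overrightarrow{A}=\sum_{l=0}^n E_{l,l}\otimes A_l$ is block diagonal, the Borel functional calculus gives $\phi_j(\overrightarrow{A})=\sum_{l=0}^n E_{l,l}\otimes\phi_j(A_l)$. Using the relations $E_{i,j}E_{k,m}=\delta_{j,k}E_{i,m}$ one computes $\phi_0(\overrightarrow{A})\,z_1=E_{0,1}\otimes\phi_0(A_0)x_1$, and then, multiplying successively on the right by $\phi_1(\overrightarrow{A}),z_2,\phi_2(\overrightarrow{A}),\ldots,z_n,\phi_n(\overrightarrow{A})$, the matrix part stays of the form $E_{0,k}$, increasing by one with each $z_l=E_{l-1,l}\otimes x_l$ and left unchanged by each $\phi_j(\overrightarrow{A})$, while the operator part accumulates the word $\phi_0(A_0)x_1\phi_1(A_1)x_2\cdots$. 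A one-line induction on the number of factors already multiplied then yields
\[
T^{\overrightarrow{\boldA}}_{\phi}(z_1,\ldots,z_n)=\phi_0(\overrightarrow{A})z_1\phi_1(\overrightarrow{A})z_2\cdots z_n\phi_n(\overrightarrow{A})=E_{0,n}\otimes\big(\phi_0(A_0)x_1\phi_1(A_1)x_2\cdots\phi_{n-1}(A_{n-1})x_n\phi_n(A_n)\big),
\]
and the operator part is precisely $T^{\boldA}_{\phi}(x_1,\ldots,x_n)$, which is \eqref{Eqn=Amplification} for such $\phi$.

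\emph{General $\phi$.} I view both sides of \eqref{Eqn=Amplification} as functions of $\phi\in L_\infty(\sigma(\overrightarrow{A})^{\times n+1},\lambda_{\overrightarrow{A}}^{\times n+1})$ with values in $\cS_2(H^{(n+1)})$. The left side is weak-$\ast$-to-weak continuous: $\phi\mapsto T^{\overrightarrow{\boldA}}_{\phi}$ is weak-$\ast$ continuous into $B(\cS_2\times\cdots\times\cS_2,\cS_2)$ by \cite[Proposition 5]{CLS}, and evaluation at the fixed tuple $(z_1,\ldots,z_n)$ is weak-$\ast$-to-weak continuous, since for each $\eta\in\cS_2$ pairing the value with $\eta$ amounts to pairing $T^{\overrightarrow{\boldA}}_{\phi}$ with a fixed element of the projective-tensor predual. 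The right side $\phi\mapsto E_{0,n}\otimes T^{\boldA}_{\phi}(x_1,\ldots,x_n)$ is weak-$\ast$-to-weak continuous for the same reason, after precomposing with the restriction map $L_\infty(\sigma(\overrightarrow{A})^{\times n+1})\to L_\infty(\prod_{i=0}^n\sigma(A_i))$ — which is weak-$\ast$ continuous, being the adjoint of a bounded map between the $L_1$-preduals — and postcomposing with the isometry $y\mapsto E_{0,n}\otimes y$ of $\cS_2(H)$ into $\cS_2(H^{(n+1)})$. By the elementary case the two continuous maps agree on the linear span of elementary tensors, which is weak-$\ast$ dense in $L_\infty(\sigma(\overrightarrow{A})^{\times n+1})$; hence they agree on that whole space, in particular on the class of any bounded Borel $\phi:\mathbb{R}^{n+1}\to\mathbb{C}$, giving \eqref{Eqn=Amplification}.

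\emph{Main point to watch.} The only non-formal step is checking that the restriction map above is well defined. For this one uses that $\overrightarrow{A}$ has spectral measure $E_{\overrightarrow{A}}=\bigoplus_{i=0}^n E_{A_i}$, so that $\lambda_{\overrightarrow{A}}$ may be chosen with $\lambda_{A_i}\ll\lambda_{\overrightarrow{A}}|_{\sigma(A_i)}$ for each $i$; consequently $\prod_i\lambda_{A_i}\ll\lambda_{\overrightarrow{A}}^{\times n+1}|_{\prod_i\sigma(A_i)}$, so equal $\lambda_{\overrightarrow{A}}^{\times n+1}$-classes restrict to equal $\prod_i\lambda_{A_i}$-classes and a bounded Borel $\phi$ induces matching classes on the two sides. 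Beyond this bookkeeping I anticipate no real obstacle; the content of the proposition is the block-diagonal computation of the elementary case.
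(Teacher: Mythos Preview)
Your proposal is correct and follows essentially the same approach as the paper: verify \eqref{Eqn=Amplification} on elementary tensors by the block-diagonal matrix-unit computation, then pass to general $\phi$ by weak-$\ast$ density and weak-$\ast$ continuity of $\phi\mapsto T_\phi$. The paper's proof is terser on the second step (it simply invokes weak-$\ast$ continuity of both $\phi\mapsto T^{\boldA}_\phi$ and $\phi\mapsto T^{\overrightarrow{\boldA}}_\phi$), whereas you spell out the restriction map between the two $L_\infty$ spaces and its well-definedness via absolute continuity of the spectral measures; this extra bookkeeping is a genuine clarification rather than a different idea.
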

\begin{proof}
By linearity and weak-$\ast$-continuity  of the maps
\[\phi\to T^{\boldA}_{\phi},\quad \phi\to T^{\overrightarrow{\boldA}}_{\phi},
\]
\noindent it suffices to check \eqref{Eqn=Amplification} for $\phi = \phi_0 \otimes \ldots \otimes \phi_n$ an elementary tensor product of bounded Borel functions $\phi_i: \mathbb{R} \rightarrow \mathbb{C}$.   We have,
\[
\phi_k(\overrightarrow{A}) = \sum_{l=0}^n E_{l,l} \otimes \phi_k(A_l).
\]
Thus,
\[
\phi_k(\overrightarrow{A}) z_{k+1} =
(E_{k,k} \otimes \phi_k(A_k)) (E_{k, k+1} \otimes x_{k+1}).
\]
Therefore,
\[
\begin{split}
T^{\overrightarrow{\boldA}}_{\phi}(z_1, \ldots, z_n ) = & \phi_0(\overrightarrow{A} ) z_1 \phi_1(\overrightarrow{A} ) \ldots  \phi_{n-1}(\overrightarrow{A} ) z_n \phi_{n}(\overrightarrow{A} )  \\
= &  (E_{0,0} \otimes \phi_0(A_0)) (E_{0,1} \otimes x_1) (E_{1,1} \otimes \phi_1(A_1)) (E_{1,2} \otimes x_2) \ldots  \\
 & \ldots (E_{n-1,n-1} \otimes \phi_{n-1}(A_{n-1} )) (E_{n-1,n} \otimes x_n) (E_{n,n} \otimes \phi_n(A_n)) \\
 = & E_{0,n}  \otimes (\phi_0(A_0) x_1 \phi_1(A_1) \ldots \phi_{n-1}(A_{n-1} ) x_n  \phi_n(A_n)) \\
 = & E_{0,n} \otimes T^{\boldA}_{\phi}(x_1, \ldots, x_n).
\end{split}
\]
This concludes the proof.
\end{proof}

\begin{cor} \label{Cor=Amplify}
Recall that $H$ is an infinite dimensional separable Hilbert space. Let $1 \leq p_1, \ldots, p_n <\infty$.  Let $\phi: \mathbb{R}^{n+1} \rightarrow \mathbb{C}$ be a Borel function. Suppose that there exists a constant $D>0$ such that for all $A \in B(H)_{sa}$ we have
\[
\Vert T_\phi^{(A, \ldots, A)}: \cS_{p_1} \times \ldots \times \cS_{p_n} \rightarrow \cS_{1, \infty} \Vert \leq D.
\]
Then in fact  for all $A_0, \ldots, A_n \in B(H)_{sa}$ we have
\[
\Vert T_\phi^{(A_0, \ldots, A_n)}: \cS_{p_1} \times \ldots \times \cS_{p_n} \rightarrow \cS_{1, \infty} \Vert \leq D.
\]
The same statement holds with the target space $\cS_{1, \infty}$ replaced by $\cS_r$ for $1 \leq r \leq \infty$.
\end{cor}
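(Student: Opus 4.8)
The plan is to reduce everything to the matrix amplification identity of Proposition~\ref{Prop=Amplification}, exploiting the fact that the $(n+1)$-fold direct sum $H^{(n+1)}$ is again an infinite dimensional separable Hilbert space. Fix $\boldA = (A_0, \ldots, A_n) \in B(H)_{sa}^{\times n+1}$, form $\overrightarrow{A} = \sum_{l=0}^n E_{l,l} \otimes A_l \in B(H^{(n+1)})_{sa}$ and the diagonal tuple $\overrightarrow{\boldA} = (\overrightarrow{A}, \ldots, \overrightarrow{A})$. Since $H^{(n+1)} \cong H$ unitarily, fix such a unitary $U \colon H^{(n+1)} \to H$; then $\Ad_U$ is a trace preserving $\ast$-isomorphism $B(H^{(n+1)}) \to B(H)$ that is isometric for every $\cS_q$-norm and for the $\cS_{1,\infty}$-quasi-norm. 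Testing on elementary tensor products of bounded Borel functions and invoking weak-$\ast$ continuity exactly as in the proof of Proposition~\ref{Prop=Amplification} (functional calculus commutes with conjugation by $U$, and $\overrightarrow{A}$ and $\Ad_U \overrightarrow{A}$ have the same spectrum and null sets), one obtains the naturality relation $T^{(\Ad_U \overrightarrow{A}, \ldots, \Ad_U \overrightarrow{A})}_{\phi}(\Ad_U z_1, \ldots, \Ad_U z_n) = \Ad_U\big( T^{\overrightarrow{\boldA}}_{\phi}(z_1, \ldots, z_n)\big)$. As $\Ad_U \overrightarrow{A} \in B(H)_{sa}$, the hypothesis then yields
\[
\big\Vert T^{\overrightarrow{\boldA}}_{\phi}\colon \cS_{p_1}(H^{(n+1)}) \times \cdots \times \cS_{p_n}(H^{(n+1)}) \to \cS_{1,\infty}(H^{(n+1)}) \big\Vert \leq D .
\]

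Next take $x_l \in \cS_2(H) \cap \cS_{p_l}(H)$ for $l = 1, \ldots, n$ and put $z_l = E_{l-1,l} \otimes x_l$ as in Proposition~\ref{Prop=Amplification}. Because $E_{l-1,l}$ is a rank one partial isometry we have $\vert z_l \vert = E_{l,l} \otimes \vert x_l \vert$, whence $\mu_k(z_l) = \mu_k(x_l)$ for all $k$; in particular $z_l \in \cS_2 \cap \cS_{p_l}$ on $H^{(n+1)}$ with $\Vert z_l \Vert_{p_l} = \Vert x_l \Vert_{p_l}$. Likewise $\vert E_{0,n} \otimes y \vert = E_{n,n} \otimes \vert y \vert$, so for any $y \in B(H)$ one has $E_{0,n} \otimes y \in \cS_{1,\infty}(H^{(n+1)})$ if and only if $y \in \cS_{1,\infty}(H)$, with $\Vert E_{0,n} \otimes y \Vert_{1,\infty} = \Vert y \Vert_{1,\infty}$ (and the analogous identity for $\cS_r$, $1 \le r \le \infty$). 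Combining Proposition~\ref{Prop=Amplification} with the displayed estimate gives
\[
\Vert T^{\boldA}_{\phi}(x_1, \ldots, x_n) \Vert_{1,\infty} = \Vert T^{\overrightarrow{\boldA}}_{\phi}(z_1, \ldots, z_n) \Vert_{1,\infty} \leq D \prod_{l=1}^n \Vert z_l \Vert_{p_l} = D \prod_{l=1}^n \Vert x_l \Vert_{p_l},
\]
and taking the infimum over admissible constants in the definition of the norm proves $\Vert T^{\boldA}_{\phi}\colon \cS_{p_1} \times \cdots \times \cS_{p_n} \to \cS_{1,\infty} \Vert \leq D$. Replacing $\cS_{1,\infty}$ by $\cS_r$ throughout yields the last assertion word for word.

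There is no genuine obstacle in this argument; the only step that calls for attention is the passage from operators on $H$ to operators on $H^{(n+1)}$, which is exactly where the standing assumption that $H$ is infinite dimensional and separable is used, together with the routine observation that the multiple operator integral is natural under unitary conjugation. I expect that to be the sole point requiring care, and it is dispatched by the unitary $U$ above.
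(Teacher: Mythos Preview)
Your argument is correct and follows the same route as the paper: both use Proposition~\ref{Prop=Amplification} together with the unitary equivalence $H^{(n+1)}\cong H$ to transfer the diagonal bound to $\overrightarrow{A}$, then read off the desired estimate from $\Vert E_{0,n}\otimes y\Vert_{1,\infty}=\Vert y\Vert_{1,\infty}$ and $\Vert z_l\Vert_{p_l}=\Vert x_l\Vert_{p_l}$. The only difference is that you spell out the naturality of the multiple operator integral under unitary conjugation and the singular-value computations explicitly, whereas the paper leaves these implicit.
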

\begin{proof}
We use the notation of Proposition \ref{Prop=Amplification} and the fact that $H$ is isomorphic to $H^{(n+1)}$ because the dimension of $H$ is infinite. Note that for $l = 1, \ldots, n$ we have $\Vert x_l \Vert_{p_l} = \Vert z_l \Vert_{p_l}$. We thus have
\[
\begin{split}
& \Vert  T_\phi^{(A_0, \ldots, A_n)}(x_1, \ldots, x_n) \Vert_{\cS_{1, \infty}} =
\Vert E_{0,n} \otimes  T_\phi^{(A_0, \ldots, A_n)}(x_1, \ldots, x_n) \Vert_{\cS_{1, \infty}}\\
= &
\Vert T^{ (\overrightarrow{A}, \ldots, \overrightarrow{A})}_{\phi}(z_1, \ldots, z_n ) \Vert_{\cS_{1, \infty}}
\leq  D \prod_{l=1}^n \Vert z_l \Vert_{\cS_{p_l}}
  =   D \prod_{l=1}^n \Vert x_l \Vert_{\cS_{p_l}}.
\end{split}
\]
\end{proof}

\subsection{Reduction to the case $0 \not \in \sigma_p(A)$}
For $\phi: \mathbb{R}^{n+1} \rightarrow \mathbb{C}$ bounded Borel and $\delta \in \mathbb{R}$ let $\tau_\delta(\phi)(t_0, \ldots, t_n) = \phi(t_0+\delta, \ldots, t_n+\delta)$.
Recall that $\chi_{ 0  } := \chi_{ \{ 0 \} }$.
For $A \in B(H)_{sa}$ we have
\begin{equation}\label{Eqn=PhiTranslate}
T_{\tau_\delta(\phi)}^{(A, \ldots, A)} = T_{\phi}^{(A - \delta, \ldots, A - \delta)}, \qquad
T_{c \chi_{ 0  }}^{(A, \ldots, A)} = T_{c}^{( \chi_{0}( A)  , \ldots, \chi_0( A))},\quad c \in \mathbb{C}.
\end{equation}
Indeed,   one first verifies the first equality of \eqref{Eqn=PhiTranslate} on elementary tensors $\phi = \phi_0 \otimes \ldots \otimes \phi_n$ and then uses weak-$\ast$ density. The second equality of \eqref{Eqn=PhiTranslate} follows straight from the definitions.

\begin{prop}\label{Prop=ApproxZero}
Let $1 \leq p_1, \ldots, p_n < \infty$ be such that $\sum_{l=1}^n \frac{1}{p_l} = 1$.
Let $\phi: \mathbb{R}^{n+1} \rightarrow \mathbb{C}$ be a function that is continuous  on $\mathbb{R}^{n+1} \backslash \{ 0 \}$ and  with $\phi(0) = 0$ and $\Vert \phi \Vert_\infty \leq 1$. Suppose that there exists a constant $D >0$ such  that for all $A \in B(H)_{sa}$ with $0 \not \in \sigma_p(A)$ we have that
\begin{equation}\label{Eqn=ZeroApproxAssum}
\Vert T^{(A, \ldots, A)}_{\phi}: \cS_{p_1} \times \ldots \times \cS_{p_n} \rightarrow \cS_{1, \infty} \Vert \leq D.
\end{equation}
 Then for all $A \in B(H)_{sa}$  we have
 \[
\Vert T^{(A, \ldots, A)}_{\phi}: \cS_{p_1} \times \ldots \times \cS_{p_n} \rightarrow \cS_{1, \infty} \Vert \leq 2D+2.
\]
\end{prop}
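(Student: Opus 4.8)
The plan is to approximate $A$ by its translates $A-\delta$, which for all but countably many $\delta$ have no eigenvalue at $0$, to transport the hypothesis through the translation identity \eqref{Eqn=PhiTranslate}, and then to let $\delta\to 0$. The only way this limit can fail to be harmless is if $\lambda_A$ has an atom at $0$; the resulting correction term will be supported on $\ker A$ and will be a H\"older contraction into $\cS_1$, and the constant $2D+2$ then falls out of the quasi-triangle inequality for $\cS_{1,\infty}$.

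In detail, I would fix $A$; if $0\notin\sigma_p(A)$ there is nothing to prove, so I may assume $P_0:=\chi_0(A)=E_A(\{0\})\neq0$. Fix $\vec x=(x_1,\dots,x_n)$ with $x_l\in\cS_2\cap\cS_{p_l}$, so it suffices to bound $\Vert T_\phi^{(A,\dots,A)}(\vec x)\Vert_{1,\infty}$ by $(2D+2)\prod_{l=1}^n\Vert x_l\Vert_{p_l}$. Since $H$ is separable, $\sigma_p(A)$ is countable, so I pick a sequence $\delta_j\to0$ with $\delta_j\notin\sigma_p(A)$; then $0\notin\sigma_p(A-\delta_j)$, and \eqref{Eqn=ZeroApproxAssum} together with the first identity in \eqref{Eqn=PhiTranslate} gives $\Vert T_{\tau_{\delta_j}(\phi)}^{(A,\dots,A)}(\vec x)\Vert_{1,\infty}=\Vert T_\phi^{(A-\delta_j,\dots,A-\delta_j)}(\vec x)\Vert_{1,\infty}\le D\prod_{l=1}^n\Vert x_l\Vert_{p_l}$, uniformly in $j$, while the Hilbert--Schmidt norm identity from Section \ref{Sect=MOI} and $\Vert\tau_{\delta_j}(\phi)\Vert_\infty=\Vert\phi\Vert_\infty\le1$ give the further uniform bound $\Vert T_{\tau_{\delta_j}(\phi)}^{(A,\dots,A)}(\vec x)\Vert_2\le\prod_{l=1}^n\Vert x_l\Vert_2$. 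Next I identify the weak-$\ast$ limit of the symbols: since $\phi$ is continuous on $\mathbb{R}^{n+1}\setminus\{0\}$, $\tau_{\delta_j}(\phi)(t)\to\phi(t)$ for every $t\neq0$ and $\vert\tau_{\delta_j}(\phi)\vert\le1$ throughout, and the only point carrying $\lambda_A^{\otimes(n+1)}$-mass on which this convergence can fail is $0=(0,\dots,0)$; using separability of $L_1(\lambda_A^{\otimes(n+1)})$ to extract a weak-$\ast$ convergent subsequence and dominated convergence off $\{0\}$, the limiting class is $[\phi]+c[\chi_0]$ for some $c\in\mathbb{C}$ with $\vert c\vert\le1$ (here the hypothesis $\phi(0)=0$ enters).

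Then I transport the limit to the operator level. By weak-$\ast$ continuity of $\phi'\mapsto T_{\phi'}^{(A,\dots,A)}$ (Section \ref{Sect=MOI}) and the fact that $T\mapsto\Tr(T(\vec x)z^\ast)$ is a predual functional for $\vec x\in\cS_2^{\times n}$ and $z\in\cS_2$, the operators $T_{\tau_{\delta_j}(\phi)}^{(A,\dots,A)}(\vec x)$ converge weakly in $\cS_2$ to $T_\phi^{(A,\dots,A)}(\vec x)+c\,T_{\chi_0}^{(A,\dots,A)}(\vec x)$; and straight from the definition, $T_{\chi_0}^{(A,\dots,A)}(\vec x)=P_0x_1P_0x_2\cdots P_0x_nP_0$, which by H\"older's inequality (using $\sum_{l}p_l^{-1}=1$) lies in $\cS_1$ with $\Vert T_{\chi_0}^{(A,\dots,A)}(\vec x)\Vert_1\le\prod_{l=1}^n\Vert x_l\Vert_{p_l}$. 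Combining the uniform $\cS_2$-bound, the uniform bound $\le D\prod_{l=1}^n\Vert x_l\Vert_{p_l}$ from the previous step, and weak $\cS_2$-convergence with the lower semicontinuity of the $\cS_{1,\infty}$ quasi-norm yields $\Vert T_\phi^{(A,\dots,A)}(\vec x)+c\,T_{\chi_0}^{(A,\dots,A)}(\vec x)\Vert_{1,\infty}\le D\prod_{l=1}^n\Vert x_l\Vert_{p_l}$. Finally, writing $T_\phi^{(A,\dots,A)}(\vec x)=\big(T_\phi^{(A,\dots,A)}(\vec x)+c\,T_{\chi_0}^{(A,\dots,A)}(\vec x)\big)-c\,T_{\chi_0}^{(A,\dots,A)}(\vec x)$ and applying the quasi-triangle inequality $\Vert u+v\Vert_{1,\infty}\le2\Vert u\Vert_{1,\infty}+2\Vert v\Vert_{1,\infty}$ together with $\Vert\cdot\Vert_{1,\infty}\le\Vert\cdot\Vert_1$ and $\vert c\vert\le1$ produces $\Vert T_\phi^{(A,\dots,A)}(\vec x)\Vert_{1,\infty}\le(2D+2)\prod_{l=1}^n\Vert x_l\Vert_{p_l}$, as claimed.

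The main obstacle is the lower-semicontinuity input just used: if $y_j\rightharpoonup y$ weakly in $\cS_2$ with $\sup_j\Vert y_j\Vert_2<\infty$ and $\sup_j\Vert y_j\Vert_{1,\infty}\le C$, then $\Vert y\Vert_{1,\infty}\le C$. I would prove this by showing that for each fixed $k$ the set $\{z:\Vert z\Vert_2\le M,\ \mu_k(z)\le c\}$ is closed under weak $\cS_2$-limits --- a weak $\cS_2$-cluster point $F$ of the (uniformly $\cS_2$-bounded) best rank-$k$ approximants of the $y_j$ has $\mathrm{rank}(F)\le k$, since all $(k{+}1)\times(k{+}1)$ minors of $F$ are limits of vanishing minors (weak $\cS_2$-convergence implies weak operator convergence and the determinant is continuous), and $\Vert y-F\Vert\le c$ by weak-operator lower semicontinuity of the operator norm --- and then intersecting over $k$; alternatively one upgrades the weak $\cS_2$-convergence above to $\cS_2$-norm (hence in-measure) convergence via the square-function identity for Hilbert--Schmidt multiple operator integrals and dominated convergence, and invokes the Fatou property of $\cS_{1,\infty}$. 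A minor point, already accounted for, is that for a general admissible $\phi$ the diagonal trace $\lim_j\tau_{\delta_j}(\phi)(0,\dots,0)$ need not exist; only the subsequential bound $\vert c\vert\le1$ is used, and the correction term it produces is absorbed by H\"older's inequality.
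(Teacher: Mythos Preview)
Your argument is correct and follows the same core strategy as the paper: translate $A$ to $A-\delta$ via \eqref{Eqn=PhiTranslate}, use the hypothesis, separate off the correction supported on $\{0\}$ (bounded into $\cS_1$ by H\"older), pass to the limit via a Fatou/lower-semicontinuity argument, and finish with the quasi-triangle inequality to produce $2D+2$.

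The execution differs in one respect worth noting. The paper takes a two-stage approximation: first it assumes $\sigma(|A|)\subseteq\{0\}\cup(\alpha,\infty)$, so that $\sigma(A)^{n+1}\setminus\{0\}$ is compactly contained in $\mathbb{R}^{n+1}\setminus\{0\}$ and $\phi_\delta=\tau_\delta(\phi)-\phi(\delta,\ldots,\delta)\chi_0\to\phi$ \emph{uniformly} on $\sigma(A)^{n+1}$, giving $\cS_2$-norm convergence and a routine Fatou application; it then removes the spectral gap by a second approximation $P_\alpha x_l P_\alpha\to x_l$. You instead do everything in one pass, extracting a subsequence along which $\phi(\delta_j,\ldots,\delta_j)\to c$, identifying the weak-$\ast$ symbol limit as $\phi+c\chi_0$, and invoking lower semicontinuity of $\Vert\cdot\Vert_{1,\infty}$ under \emph{weak} $\cS_2$-limits. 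Your justification of that last lower-semicontinuity (rank-$\leq k$ is WOT-closed, operator norm is WOT-lsc) is sound; the paper sidesteps it by arranging norm convergence. Your ``alternative'' via a square-function identity and dominated convergence is less clear in the multilinear case without a spectral gap, but it is not needed since your primary route already works.
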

\begin{proof}
For $\delta > 0$ let $\phi_\delta(t_0, \ldots, t_n) = \phi(t_0 + \delta, \ldots, t_n + \delta)$ if $(t_0, \ldots, t_n)$ is non-zero and $\phi_\delta(0, \ldots, 0) =0$. So
\begin{equation}\label{Eqn=PhiEx}
\phi_\delta = \tau_\delta( \phi) - \phi(\delta, \ldots, \delta)  \chi_{ 0 }.
\end{equation}
 We have that $\phi_\delta \rightarrow \phi$ uniformly on compact sets in $\mathbb{R}^{n+1} \backslash \{ 0 \}$ as $\delta \rightarrow 0$. Assume first that $A \in B(H)_{sa}$ is such that for some $\alpha >0$ we have spectral gap $\sigma( \vert A \vert) \subseteq \{ 0 \} \cup (\alpha, \infty)$.  Then $\phi_\delta \rightarrow \phi$ uniformly on $\sigma(A)^{ \times n+1}$. Therefore (see \cite[Remark after Corollary 10]{CLS}) for $x_l \in \cS_2 \cap \cS_{p_l}$ with $\Vert x_l \Vert_{\cS_{p_l}} \leq 1$  we have
\[
T_{\phi_\delta}^{ (A, \ldots, A) }(x_1, \ldots, x_n) \rightarrow T_{\phi}^{(A, \ldots, A) }(x_1, \ldots, x_n)
\]
 in norm of $\cS_2$. Further, for $\delta  >0$ we have
 \[
 T_{\phi_\delta}^{(A, \ldots, A)} \:\:
 =\!\!\!\!\!\!\!\!\!^{^{\eqref{Eqn=PhiEx}}}
  T_{\tau_\delta(\phi)}^{(A, \ldots, A)} -  T_{ \phi(\delta, \ldots, \delta)  \chi_{ 0  }}^{(A, \ldots,  A )}
  \:\:
 =
\!\!\!\!\!\!\!\!\!^{^{\eqref{Eqn=PhiTranslate}}}
 T_{\phi}^{(A-\delta, \ldots, A-\delta)} -  T_{ \phi(\delta, \ldots, \delta)  \chi_{ 0  }}^{(A, \ldots,  A )}.
 \]
 If  $0< \delta < \alpha$ then $0 \not \in \sigma_p(A - \delta)$,
 so that  by assumption \eqref{Eqn=PhiTranslate} and the quasi-triangle inequality,
\[
\begin{split}
\Vert T_{\phi_\delta}^{(A, \ldots, A)}(x_1, \ldots, x_n) \Vert_{\cS_{1, \infty}} \leq &  2 \Vert  T_{\phi}^{(A-\delta, \ldots, A-\delta)}(x_1, \ldots, x_n) \Vert_{\cS_{1, \infty}} +
2 \Vert  T_{ \phi(\delta, \ldots, \delta)   }^{(\chi_0(A), \ldots, \chi_0(A)  )}(x_1, \ldots, x_n) \Vert_{\cS_{1, \infty}} \\
\leq & 2 (D +   \phi(\delta, \ldots, \delta))
\leq  (2D + 2).
\end{split}
\]
By the Fatou property \cite{DDPS1} we have $T_{\phi}^{(A, \ldots, A) }(x_1, \ldots, x_n) \in \cS_{1, \infty}$ with norm majorized by $2D + 2$.

Now take general $A \in B(H)_{sa}$, not necessarily with spectral gap. Take again $x_l \in \cS_2 \cap \cS_{p_l}$ with $\Vert x_l \Vert_{\cS_{p_l}} \leq 1$. For $\alpha > 0$ set
\[
P_\alpha := \chi_{(-\infty, - \alpha) \cup \{ 0 \} \cup (\alpha, \infty)}(A).
\]
We have as $\alpha \searrow 0$ that $P_{\alpha} x_l P_{\alpha} \rightarrow x_l$ both in the  norm of $\cS_{2}$ and $\cS_{p_l}$, see \cite{ChilSuk-JOT}. It follows that for $\alpha \searrow 0$,
\[
T_{\phi}^{(A, \ldots, A)} ( P_\alpha x_1 P_\alpha, \ldots, P_\alpha x_n P_\alpha) \rightarrow
T_{\phi}^{(A, \ldots, A)} (  x_1 , \ldots,  x_n ),
\]
in the norm of $\cS_2$.
Further,
\[
  T_{\phi}^{(A, \ldots, A)} ( P_\alpha x_1 P_\alpha, \ldots, P_\alpha x_n P_\alpha)   =   T_{\phi}^{(P_\alpha A, \ldots, P_\alpha A)} ( P_\alpha x_1 P_\alpha, \ldots, P_\alpha x_n P_\alpha),
\]
and the right hand side of this expression is in $\cS_{1, \infty}$ with norm majorized by $2D+2$. By the Fatou property \cite{DDPS1} we conclude that  $ T_{\phi}^{(A, \ldots, A)} ( x_1 , \ldots, x_n ) \in \cS_{1, \infty}$ with norm majorized by $2D+2$.

\end{proof}

\subsection{Divided differences}
 Let $C^n(I)$ be the set of all  $n$ times continuously differentiable functions on  $I$.
For $g \in C^{n}(\mathbb{R})$ let $g^{(n)}$ be the $n$-th order derivative of $g$.
Let $\Cn$ be the space of functions $f \in C^{n-1}(\mathbb{R})$ whose restriction to $\mathbb{R} \backslash \{ 0 \}$ is in $C^n(\mathbb{R} \backslash \{ 0 \})$.  For $f \in \Cn$ we set the $n$-th order divided difference function $f^{[k,n]}: \mathbb{R}^{k+1} \rightarrow \mathbb{C}$ by inductively defining  for $0 \leq k \leq n$ the function, (here, $f^{[0,n]}=f$)
\begin{equation}\label{Eqn=DividedDifference}
f^{[ k, n ]}(t_0, \ldots, t_k) =
\left\{
\begin{array}{ll}
\frac{ f^{[ k-1, n ]}(t_0, t_2, t_3, \ldots, t_k) -  f^{[ k-1, n ]}(t_1,  t_2, t_3  \ldots, t_k) }{ t_0  - t_1} &t_0 \not = t_1, \\
0 &  k=n  \textrm{ and }  0 = t_0 = t_1 = \ldots = t_n,  \\
\left. \frac{d}{d t}\right|_{t= t_0}  f^{[ k-1, n ]}(t, t_2, \ldots, t_k) & \textrm{otherwise}.
\end{array}
\right.
\end{equation}
Since $f$ is $n-1$ times differentiable on $\mathbb{R}$  and $n$ times differentiable on $\mathbb{R} \backslash \{ 0 \}$ the formulae  \eqref{Eqn=DividedDifference} are well-defined. Further,  $f^{[k,n]}, 0 \leq k < n$ is continuous on $\mathbb{R}^k$ and $f^{[n,n]}$ is continuous on $ \mathbb{R}^{n+1} \backslash \{ 0 \}$.
 For $k=n$ this definition of the divided difference function differs from the usual one (as in \cite{PSS}) in the point $0 \in \mathbb{R}^{n+1}$; the  conventional definition in our current notation would be $f^{[n,n]} + f^{(n)}(0) \chi_0$   (which requires $f$ to be in $C^{n}(\mathbb{R})$). We have  that $f^{[k,n]}$ is symmetric under permutation of the variables (see \cite{DevoreLorentz}); i.e. for any permutation $\sigma$ of $\{0, \ldots, k\}$ we have
\begin{equation}\label{Eqn=Sym}
f^{[k,n]}(t_0, \ldots, t_k) = f^{[k,n]}(t_{\sigma(0)}, \ldots, t_{\sigma(k)}).
\end{equation}
In this paper we shall fix $n$ and write
\[
f^{[ k]} := f^{[ k,n]}, \qquad 0 \leq k \leq n.
\]
The following result follows from proofs and observations that were made in \cite{PSS}.

\begin{prop}\label{Prop=EasyClass}
Let $g \in C^{n+1}(\mathbb{R})$ have compact support. Then there exists a constant $D >0$ such that for every $\boldA \in B(H)_{sa}^{\times n+1}$ we have
\[
\Vert T_{g^{[n,n]}}^{\boldA}: \cS_{p_1} \times \ldots \times \cS_{p_n} \rightarrow \cS_1 \Vert \leq D.
\]
\end{prop}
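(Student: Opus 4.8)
The plan is to reduce Proposition~\ref{Prop=EasyClass} to known results from \cite{PSS} via an integral (Fourier-analytic) representation of the divided difference $g^{[n,n]}$. First I would recall that for $g \in C^{n+1}(\mathbb{R})$ with compact support, the Fourier transform $\widehat{g}$ is rapidly decreasing, and in particular $s \mapsto |s|^{n} |\widehat{g}(s)|$ is integrable on $\mathbb{R}$; this is the decay one gains from $n+1$ derivatives. The classical formula for the divided difference of an exponential is
\[
(e_s)^{[n,n]}(t_0, \ldots, t_n) = (is)^n \int_{\Delta_n} e^{is(\lambda_0 t_0 + \cdots + \lambda_n t_n)} \, d\lambda,
\]
where $e_s(t) = e^{ist}$ and $\Delta_n = \{(\lambda_0, \ldots, \lambda_n) : \lambda_i \geq 0, \sum \lambda_i = 1\}$ is the standard simplex with its natural surface measure (total mass $1/n!$). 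Integrating against $\widehat{g}(s)$ and interchanging the order of integration (justified by the integrability just noted together with boundedness of the simplex) yields a representation
\[
g^{[n,n]}(t_0, \ldots, t_n) = \int_{\mathbb{R}} \int_{\Delta_n} (is)^n \widehat{g}(s) \, e^{is \lambda_0 t_0} \cdots e^{is \lambda_n t_n} \, d\lambda \, ds,
\]
valid on all of $\mathbb{R}^{n+1}$, including at the origin where the right side gives $\frac{1}{n!} \int (is)^n \widehat{g}(s)\,ds = g^{(n)}(0)$; here one must take care that our convention \eqref{Eqn=DividedDifference} sets $g^{[n,n]}(0) = 0$, so strictly one writes the representation for $g^{[n,n]} + g^{(n)}(0)\chi_0$, which is the conventional divided difference, and then notes (using Lemma~\ref{Lem=Indicator} or a direct argument) that the $\chi_0$ term contributes a bounded operator into $\cS_1$ and can be subtracted off.

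Next I would feed this representation into the multiple operator integral. For an elementary tensor $e_{s\lambda_0} \otimes \cdots \otimes e_{s\lambda_n}$ one has explicitly
\[
T^{\boldA}_{e_{s\lambda_0} \otimes \cdots \otimes e_{s\lambda_n}}(x_1, \ldots, x_n) = e^{is\lambda_0 A_0} x_1 e^{is\lambda_1 A_1} x_2 \cdots x_n e^{is\lambda_n A_n},
\]
a product of unitaries and the $x_l$'s. By the H\"older inequality this has $\cS_1$-norm at most $\prod_l \|x_l\|_{p_l}$, uniformly in $s$, $\lambda$, and $\boldA$. Then by the weak-$\ast$-continuity and linearity of $\phi \mapsto T^{\boldA}_\phi$ (exactly as used in the proof of Proposition~\ref{Prop=Amplification}), together with the vector-valued Bochner integrability of $(s, \lambda) \mapsto (is)^n \widehat{g}(s) \, T^{\boldA}_{e_{s\lambda_0}\otimes\cdots\otimes e_{s\lambda_n}}(x_1,\ldots,x_n)$ with respect to $ds\,d\lambda$ on $\mathbb{R} \times \Delta_n$ (whose total mass is finite), one concludes
\[
T^{\boldA}_{g^{[n,n]}}(x_1, \ldots, x_n) = \int_{\mathbb{R}} \int_{\Delta_n} (is)^n \widehat{g}(s) \, e^{is\lambda_0 A_0} x_1 \cdots x_n e^{is\lambda_n A_n} \, d\lambda\, ds,
\]
and estimating under the integral sign gives
\[
\Vert T^{\boldA}_{g^{[n,n]}}(x_1, \ldots, x_n)\Vert_1 \leq \frac{1}{n!}\left(\int_{\mathbb{R}} |s|^n |\widehat{g}(s)|\, ds\right) \prod_{l=1}^n \Vert x_l\Vert_{p_l},
\]
so $D = \frac{1}{n!}\int_{\mathbb{R}} |s|^n|\widehat{g}(s)|\,ds$ works and is independent of $\boldA$. (Alternatively, and perhaps more in the spirit of the attribution to \cite{PSS}, one can cite directly that this Bochner-integral formula \emph{is} the definition of $T^{\boldA}_{g^{[n,n]}}$ there and that \cite[Theorem~5.3 / Section~3]{PSS} already records the H\"older bound; I would phrase the proof so as to take whichever of these is cleanest given what is quoted.)

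The main obstacle, and the place where care is genuinely needed, is the justification of the two interchanges of integration/limits: first, the Fubini step producing the scalar representation of $g^{[n,n]}$ on $\mathbb{R}^{n+1}$ (uniform integrability of $(is)^n\widehat{g}(s)$ against the bounded simplex is enough, but one should also confirm the representation is continuous and agrees with the recursive definition \eqref{Eqn=DividedDifference} off the origin — this is classical, e.g. \cite{DevoreLorentz}, and the behaviour at $0$ is handled by the $\chi_0$-correction discussed above); and second, commuting $T^{\boldA}$ past the $ds\,d\lambda$ integral, which requires knowing that the $\cS_2$-valued (hence also $\cS_1$-valued, on the dense domain $x_l \in \cS_2 \cap \cS_{p_l}$) integrand is Bochner measurable and that its norm is dominated by the integrable function $|s|^n|\widehat{g}(s)|$. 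Both are routine but should be stated; the weak-$\ast$-continuity of $\phi \mapsto T^{\boldA}_\phi$ established in Section~\ref{Sect=MOI} is exactly the tool that licenses the second interchange. Once these are in place the uniform bound in $\boldA$ is immediate from H\"older, and the extension from the dense domain to all of $\cS_{p_1}\times\cdots\times\cS_{p_n}$ is the standard one recorded after \eqref{Eqn=NormSp}.
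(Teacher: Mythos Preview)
Your proposal is correct and follows essentially the same approach as the paper's proof. The paper simply cites the relevant references: integrability of $\widehat{g^{(n)}}$ (equivalently of $|s|^n|\widehat{g}(s)|$), the fact that this places the conventional divided difference $g^{[n,n]} + g^{(n)}(0)\chi_0$ in the class $\mathfrak{C}_n$ of \cite{PSS} via \cite[Lemma~2.3]{Azamov}, and then invokes \cite[Lemma~3.5]{PSS}; you have unpacked precisely these steps into the explicit simplex/Fourier representation and the H\"older estimate, including the $\chi_0$-correction at the origin.
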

\begin{proof}
The conditions on $g$ imply that the Fourier transform of the $n$-th order derivative $g^{(n)}$ is integrable \cite[Lemma 7]{PSCrelle} or \cite[Top of p. 503]{PSS}. By \cite[Lemma 2.3]{Azamov} (see \cite[Top of p. 512]{PSS}) $g^{[n,n]} + g^{(n)} \chi_0$ and hence $g^{[n,n]}$ belongs to the class $\mathfrak{C}_n$ defined in \cite{PSS}. So by \cite[Lemma 3.5]{PSS} we conclude the argument.
\end{proof}

\section{A reduction formula for divided differences}\label{Sect=ReductionFormula}

The aim of this section is to demonstrate   reduction techniques for multiple operator integrals. In particular Lemma \ref{lem=crucial} is crucial in this paper.

\subsection{A special double operator integral}
Define the auxiliary functions
\[
\rho(s) = \frac{\vert s_0 \vert}{ \vert s_0 \vert + \vert s_1 \vert}, \qquad \psi(s) = \frac{\vert s_1 \vert}{\vert s_0 \vert + \vert s_1 \vert}, \qquad s = (s_0, s_1) \in \mathbb{R}^2\backslash \{ (0,0) \}.
\]
Further $\rho(0,0) = \psi(0,0) =  0$.
The following lemma is the main tool behind the paper \cite{CPSZ-JOT} and it is implicitly stated and proved there. We show how to derive it from \cite{CPSZ-JOT} in the discrete case and then refer to \cite{CPSZ-AJM} for an approximation argument. The lemma can also be derived from the much stronger result \cite[Theorem 1.2]{CPSZ-AJM}.

\begin{lem}[\cite{CPSZ-JOT}]\label{Lem=PsiBoundNew}
There exists $C >0$ such that for every $A_0, A_1 \in B(H)_{sa}$ we have,
\begin{equation}\label{Eqn=WeakAbs}
\Vert T_{\rho}^{A_0, A_1}: \cS_{1} \rightarrow \cS_{1, \infty} \Vert    \leq C.
\end{equation}
 The same statement is true with $\rho$ replaced by $\psi$.
\end{lem}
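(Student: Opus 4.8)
The plan is to reduce the statement to a known weak-type $(1,1)$ bound for a classical singular-integral-type operator, or — more in the spirit of the paper — to deduce it directly from \cite{CPSZ-JOT}. I would first observe that $\rho$ and $\psi$ are bounded Borel functions on $\mathbb{R}^2$, homogeneous of degree $0$ away from the origin and continuous on $\mathbb{R}^2 \backslash \{(0,0)\}$, with $\rho + \psi \equiv 1$ off the origin; hence it suffices to prove the estimate for $\rho$, the claim for $\psi$ following from $T_1^{A_0,A_1}(x) = x$ (a contraction $\cS_1 \to \cS_1 \subseteq \cS_{1,\infty}$) together with the quasi-triangle inequality for $\Vert \cdot \Vert_{1,\infty}$. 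So the whole problem is the single bound $\Vert T_\rho^{A_0,A_1}\colon \cS_1 \to \cS_{1,\infty}\Vert \leq C$.

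Next I would do the discrete reduction exactly as announced in the excerpt. Fix $A_0, A_1 \in B(H)_{sa}$ and approximate each by operators with finite (or at least discrete, bounded) spectrum: replace $A_i$ by $A_i^{(m)} = \sum_{l} \tfrac{l}{m} E^{(i)}_{l,m}$ where $E^{(i)}_{l,m}$ is the spectral projection of $A_i$ onto $[\tfrac{l}{m}, \tfrac{l+1}{m})$. Since $\rho$ is continuous on $\mathbb{R}^2 \backslash \{(0,0)\}$ and bounded, the associated symbols converge pointwise and boundedly, so $T^{A_0^{(m)},A_1^{(m)}}_\rho \to T^{A_0,A_1}_\rho$ in the weak-$\ast$ (hence $\cS_2$) sense on fixed Hilbert–Schmidt inputs; combined with the Fatou property of $\cS_{1,\infty}$ (used already in the proof of Proposition \ref{Prop=ApproxZero}) a uniform-in-$m$ bound on the discrete operators gives the bound for $A_0,A_1$. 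In the discrete case $T^{A_0^{(m)},A_1^{(m)}}_\rho(x) = \sum_{k,l} \rho(\tfrac{k}{m},\tfrac{l}{m})\, E^{(0)}_{k,m}\, x\, E^{(1)}_{l,m}$, i.e. a Schur-type multiplier with the matrix $\big(\tfrac{|k|}{|k|+|l|}\big)_{k,l}$ acting on the "block matrix" $(E^{(0)}_{k,m} x E^{(1)}_{l,m})_{k,l}$. This is precisely the object for which the main estimate of \cite{CPSZ-JOT} was established (the weak-$(1,1)$ boundedness of the Schur multiplier attached to the absolute value function / its divided difference $|t|^{[1]}(s,t) = \tfrac{|s|-|t|}{s-t} = \sgn$-type kernel), after noting $\rho(s_0,s_1) = \tfrac12\big(1 + \tfrac{|s_0|-|s_1|}{|s_0|+|s_1|}\big)$ and that $\tfrac{|s_0|-|s_1|}{|s_0|+|s_1|}$ is exactly the kernel handled there via the change of variables $u_i = |s_i|$. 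I would cite \cite[main estimate / proof]{CPSZ-JOT} for the uniform bound on these discrete Schur multipliers, and invoke the approximation argument of \cite{CPSZ-AJM} to pass from the discrete setting to arbitrary $A_0, A_1$, exactly as the text of the lemma promises.

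The step I expect to be the genuine obstacle is the first one: the weak-$(1,1)$ bound for the discrete Schur multiplier itself. That is not a formal manipulation — it is the hard analytic core, and it is the content of \cite{CPSZ-JOT}; here I would only be repackaging it, taking care that the normalization $\rho + \psi = 1$ and the symmetrization $\rho(s_0,s_1) = \tfrac12(1 + \eta(|s_0|,|s_1|))$ with $\eta(u,v) = \tfrac{u-v}{u+v}$ correctly match the kernel treated there (in particular that passing to absolute values $s_i \mapsto |s_i|$ does not affect Schur-multiplier bounds, since it amounts to regrouping spectral projections into the positive and negative parts of each $A_i$, cf. Lemma \ref{Lem=Indicator}). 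Everything else — the reduction from $\psi$ to $\rho$, the discretization, and the Fatou-property limiting argument — is routine given the tools already assembled in Sections \ref{Sect=Prelim} and the cited companion papers. Alternatively, and even more cleanly, one derives \eqref{Eqn=WeakAbs} directly from \cite[Theorem 1.2]{CPSZ-AJM}, which asserts weak-$(1,1)$ boundedness of $T^{A_0,A_1}_{f^{[1]}}$ for every Lipschitz $f$: since $\rho$ and $\psi$ are not of the form $f^{[1]}$ but are bounded continuous symbols, one instead notes that the Lipschitz functions generate, under the double-operator-integral calculus, enough symbols (together with $\chi_+$, $\chi_-$ from Lemma \ref{Lem=Indicator}) to express $\rho$ and $\psi$, reducing to finitely many applications of that theorem and the quasi-triangle inequality; I would present the \cite{CPSZ-JOT} route as the primary argument and mention the \cite{CPSZ-AJM} route as an alternative.
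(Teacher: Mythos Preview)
Your proposal is correct and follows essentially the same route as the paper: both arguments reduce $\psi$ to $\rho$ via $\rho+\psi=1$, both use the key identity $\rho(s_0,s_1)=\tfrac12\bigl(1+\tfrac{|s_0|-|s_1|}{|s_0|+|s_1|}\bigr)$ to recognize the kernel treated in \cite[Lemma~3.2]{CPSZ-JOT}, and both defer the passage from finite-spectrum operators to general ones to the approximation machinery of \cite[Section~5]{CPSZ-AJM}. The only organizational difference is that the paper first invokes Corollary~\ref{Cor=Amplify} to reduce to $A_0=A_1=A$ and Proposition~\ref{Prop=ApproxZero} to assume $0\notin\sigma_p(A)$, then decomposes into the four sign-quadrants $\chi_{\epsilon_1\epsilon_2}$ and reduces by symmetry to $A\geq 0$; you instead keep two operators throughout and handle the absolute values by regrouping spectral projections of each $A_i$ into positive and negative parts, which amounts to the same quadrant decomposition carried out in a slightly different order.
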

\begin{proof}[Proof sketch]
By Corollary \ref{Cor=Amplify} we may assume that $A_0 = A_1 = A \in B(H)_{sa}$. By Proposition \ref{Prop=ApproxZero} we may assume that $0 \not \in \sigma_p(A)$.
For $\epsilon_1, \epsilon_2 \in \{ -, +\}$ let $\chi_{\epsilon_1 \epsilon_2}$ be the indicator function of $\epsilon_1 \mathbb{R}_{\geq 0} \times \epsilon_2 \mathbb{R}_{\geq 0}$. Under these assumptions it follows from the definition of the double/multiple operator integral that
\[
T^{A, A}_{\rho} = T_{ \chi_{++} \rho}^{A, A} +  T_{ \chi_{+-} \rho}^{A, A} +  T_{ \chi_{-+} \rho}^{A, A} + T_{ \chi_{--} \rho}^{A, A}.
\]
Hence it suffices to estimate the norm of each of the latter four summands.   Note that $\rho(s_0, s_1) = \rho( \vert s_0 \vert ,  \vert s_1 \vert )$. We have
\[
T_{  \chi_{--}  \rho}^{-A, -A}  = T_{\chi_{-+} \rho}^{-A, A}  = T_{\chi_{+-} \rho}^{A, -A} = T_{\chi_{++} \rho}^{A, A},
\]
and so it suffices to estimate $T_{\chi_{++} \rho}^{A, A}$.   Setting  $A_+ = \chi_{[0, \infty)}(A)$
we have   that  $T_{\chi_{++} \rho}^{A, A} = T_{\chi_{++} \rho}^{A_+, A_+}$ so that we may assume without loss of generality that $A$ has non-negative spectrum.

Assume further that $A$ has finite spectrum and $0 \not \in \sigma(A)$. So $A = \sum_{k=1}^K \lambda_k q_k$ with $\lambda_k > 0$ and $q_k$ the spectral projections. Then,
\[
T_{\rho}^{A, A}(x) = \sum_{k,l = 1}^K  \frac{\lambda_k}{\lambda_k + \lambda_l} q_k x q_l = \frac{1}{2} \sum_{k,l = 1}^K  \left( 1 +   \frac{\lambda_k - \lambda_l}{\lambda_k + \lambda_l} \right) q_k x q_l .
\]
Then \cite[Lemma 3.2]{CPSZ-JOT} shows that $T_{\rho}^{A, A}$ is bounded $\cS_1\rightarrow \cS_{1, \infty}$ with bound uniform in  $A\in B(H)_{sa}$.  For $A \geq 0$ arbitrary we have that  $T_{\rho}^{A, A}: \cS_1\rightarrow \cS_{1, \infty}$ uniformly boundedly in $A$ by approximation (see \cite[Section 5]{CPSZ-AJM}).
Since $\psi = 1 -\phi$ the last statement of the lemma follows from the others.
\end{proof}

\subsection{A reduction formula for divided differences}

\begin{lem}\label{Lem=PreCrucial}
Let $f \in \Cn$. We have
\begin{equation}\label{Eqn=DivDifRed}
f^{[n]}(0, t_1, \ldots, t_n) = g^{[n-1]}(t_1, t_2, \ldots, t_n),
\end{equation}
where $g(t) = f^{[1]}(t,0)$ and $t , t_1, \ldots, t_n \in \mathbb{R}$. Here $f^{[n]} = f^{[n,n]}$ and $g^{[n-1]}= g^{[n-1, n-1]}$.
\end{lem}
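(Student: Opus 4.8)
The plan is to establish \eqref{Eqn=DivDifRed} first for pairwise distinct, nonzero $t_1,\dots,t_n$ by a short induction on $n$ using only the recursion \eqref{Eqn=DividedDifference} and the symmetry \eqref{Eqn=Sym}, and then to pass to the remaining values of $(t_1,\dots,t_n)$ by continuity. \emph{Setup.} For $n=1$ the statement is immediate from \eqref{Eqn=Sym}, since $g^{[0]}(t_1)=g(t_1)=f^{[1]}(t_1,0)=f^{[1]}(0,t_1)$; so assume $n\ge 2$. Then $g(t)=\frac{f(t)-f(0)}{t}$ for $t\neq 0$ and $g(0)=f^{[1]}(0,0)=f'(0)$. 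Using $g(t)=\int_0^1 f'(st)\,ds$ and differentiating up to $n-2$ times under the integral sign (legitimate since $f'\in C^{n-2}(\mathbb{R})$) one gets $g\in C^{n-2}(\mathbb{R})$, while the explicit quotient formula gives $g\in C^{n}(\mathbb{R}\setminus\{0\})$. Hence $g$ lies in the class $\Cn$ with $n$ replaced by $n-1$, so $g^{[n-1]}:=g^{[n-1,n-1]}$ is defined and, by the properties recalled after \eqref{Eqn=DividedDifference}, continuous on $\mathbb{R}^n\setminus\{0\}$.

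\emph{Distinct nonzero nodes.} I would prove by induction on $m\ge 1$ the slightly more flexible statement: for every $c\in\mathbb{R}$ and pairwise distinct $c,s_1,\dots,s_m\in\mathbb{R}$,
\[
g_c^{[m-1]}(s_1,\dots,s_m)=f^{[m]}(c,s_1,\dots,s_m),\qquad\text{where } g_c(t):=f^{[1]}(t,c).
\]
The case $m=1$ is again \eqref{Eqn=Sym}. For the inductive step, apply \eqref{Eqn=DividedDifference} to $g_c^{[m-1]}$ in its first two arguments $s_1\neq s_2$ and then the induction hypothesis to the two $(m-1)$-tuples obtained by deleting $s_2$, respectively $s_1$:
\[
g_c^{[m-1]}(s_1,\dots,s_m)=\frac{g_c^{[m-2]}(s_1,s_3,\dots,s_m)-g_c^{[m-2]}(s_2,s_3,\dots,s_m)}{s_1-s_2}=\frac{f^{[m-1]}(c,s_1,s_3,\dots,s_m)-f^{[m-1]}(c,s_2,s_3,\dots,s_m)}{s_1-s_2}.
\]
On the other hand, by \eqref{Eqn=Sym} and then \eqref{Eqn=DividedDifference} applied to $f^{[m]}(s_1,s_2,c,s_3,\dots,s_m)$ in its first two arguments, this last expression equals $f^{[m]}(s_1,s_2,c,s_3,\dots,s_m)=f^{[m]}(c,s_1,\dots,s_m)$. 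Since $c,s_1,\dots,s_m$ are pairwise distinct, only the ``$t_0\neq t_1$'' branch of \eqref{Eqn=DividedDifference} is ever used and the exceptional value at $0$ plays no role. Specializing to $c=0$, $m=n$ yields \eqref{Eqn=DivDifRed} on the open dense set $G:=\{(t_1,\dots,t_n): t_i\neq 0,\ t_i\neq t_j\ (i\neq j)\}$.

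\emph{Passage to the limit.} The map $(t_1,\dots,t_n)\mapsto f^{[n]}(0,t_1,\dots,t_n)$ is the restriction of $f^{[n,n]}$, which is continuous on $\mathbb{R}^{n+1}\setminus\{0\}$, to the slice $\{0\}\times(\mathbb{R}^n\setminus\{0\})$, hence is continuous on $\mathbb{R}^n\setminus\{0\}$; and $g^{[n-1,n-1]}$ is continuous there by the setup step. Since $G$ is dense in $\mathbb{R}^n\setminus\{0\}$ (its complement is a finite union of hyperplanes), the two sides of \eqref{Eqn=DivDifRed} agree on all of $\mathbb{R}^n\setminus\{0\}$. At the origin both vanish: $f^{[n]}(0,\dots,0)=0$ by the ``$k=n$, all zero'' clause of \eqref{Eqn=DividedDifference}, and $g^{[n-1]}(0,\dots,0)=0$ by the ``$k=n-1$, all zero'' clause. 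This finishes the proof.

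The two points needing care — neither a genuine obstacle — are the regularity check for $g$ in the setup step (so that $g^{[n-1]}$ is even defined), and the index bookkeeping in the inductive step: after each application of \eqref{Eqn=Sym} one must correctly track which variable plays the role of the frozen node $c$ before invoking \eqref{Eqn=DividedDifference}.
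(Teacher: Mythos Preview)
Your proof is correct and follows essentially the same approach as the paper's: both argue by induction using the recursion \eqref{Eqn=DividedDifference} together with the symmetry \eqref{Eqn=Sym}, treat the origin separately via the second clause of \eqref{Eqn=DividedDifference}, and appeal to continuity for the remaining coincidences. The only differences are organizational---you establish the identity on the dense set of pairwise distinct nonzero nodes and perform a single continuity extension at the end, whereas the paper extends by continuity within each inductive step---and your explicit verification that $g$ belongs to the appropriate regularity class is a detail the paper leaves implicit.
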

\begin{proof}
If all $t_i'$s are 0,  then  \eqref{Eqn=DivDifRed} follows from \eqref{Eqn=DividedDifference}. So assume not all $t_i'$s are 0. For $n = 1$ we obtain
\[
f^{[1]}(0,t_1) = \frac{f(0) - f(t_1) }{ 0 - t_1} = g(t_1).
\]
We proceed by induction and suppose that the assertion holds for $n$. We prove it for $n+1$. Assume $t_n \not = t_{n+1}$.
We have by definition \eqref{Eqn=DividedDifference} and permutation invariance of the variables \eqref{Eqn=Sym} that
$$
f^{[n+1]}(0, t_1, \ldots, t_{n+1})\stackrel{\eqref{Eqn=Sym}}{=}f^{[n+1]}(t_n,t_{n+1},0, t_1, \ldots, t_{n-1})=$$
$$\stackrel{\eqref{Eqn=DividedDifference}}{=}\frac{f^{[n]}(t_n,0, t_1, \ldots, t_{n-1}) - f^{[n]}(t_{n+1},0, t_1, \ldots, t_{n-1})}{t_n - t_{n+1}}=$$
$$\stackrel{\eqref{Eqn=Sym}}{=}\frac{f^{[n]}(0, t_1, \ldots, t_{n-1}, t_n) - f^{[n]}(0, t_1, \ldots, t_{n-1}, t_{n+1} )  }{t_n - t_{n+1}}.$$
By induction
\[
\begin{split}
f^{[n]}(0, t_1, \ldots, t_{n-1}, t_n) = & g^{[n-1]}(t_1, \ldots, t_{n-1}, t_n), \\
f^{[n]}(0, t_1, \ldots, t_{n-1}, t_{n+1}) = & g^{[n-1]}(t_1, \ldots, t_{n-1}, t_{n+1}). \\
\end{split}
\]
Hence
\[
f^{[n+1]}(0, t_1, \ldots, t_{n+1}) =
\frac{g^{[n-1]}(t_1, \ldots, t_{n-1}, t_n) - g^{[n-1]}(t_1, \ldots, t_{n-1}, t_{n+1}) }{t_n - t_{n+1}}
= g^{[n]}(t_1, \ldots, t_{n+1}).
\]
Finally, if  $t_n  = t_{n+1}$ and not all $t_i$'s are 0, then \eqref{Eqn=DivDifRed} follows by continuity from the previous cases.
\end{proof}

The following formula shall be crucial in the proof of our main theorem.  It contains a new decomposition of $f^{[n]}$ as a linear combination of a product of a function of $2$ variables and a function of $n$ variables.

\begin{lem}\label{lem=crucial}
 Let $f \in \Cn$. We have for every $0 \leq i , j \leq n, i \not = j$  and every $t \in \mathbb{R}^{n+1} \backslash \{ 0 \}$ with $t_i \not = t_j$ and $t_i \not = 0, t_j \not = 0$ that,
\begin{equation}\label{Eqn=Decompose}
\begin{split}
f^{[n]}(t_0, \ldots, t_n) = &
\frac{t_i}{t_i - t_j} g^{[n-1]}(t_0, \ldots, t_{j-1},    t_{j+1}, \ldots, t_n ) - \frac{t_j}{ t_i - t_j}  g^{[n-1]}(t_0, \ldots, t_{i-1},   t_{i+1}, \ldots, t_n ).
\end{split}
\end{equation}
Here $g(t) = f^{[1]}(t,0), t \in \mathbb{R}$. Here $f^{[n]} = f^{[n,n]}$ and $g^{[n-1]}= g^{[n-1, n-1]}$.
 \end{lem}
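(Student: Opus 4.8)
The plan is to reduce the identity \eqref{Eqn=Decompose} to the special case $i=0$, $j=1$ handled by combining Lemma \ref{Lem=PreCrucial} with the basic recursion \eqref{Eqn=DividedDifference}, and then to appeal to the permutation symmetry \eqref{Eqn=Sym} to conclude the general case. Indeed, since $f^{[n]}$ is symmetric in all of its variables, after relabelling we may assume $i=0$ and $j=1$; the claimed right-hand side is also manifestly symmetric under the remaining permutations of $\{t_2,\ldots,t_n\}$, so no information is lost in this reduction.

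So the core of the argument is the case $i=0$, $j=1$, $t_0\neq t_1$, $t_0,t_1\neq 0$. Here I would start from the basic divided-difference recursion in the first two variables,
\[
f^{[n]}(t_0,t_1,t_2,\ldots,t_n) = \frac{f^{[n-1]}(t_0,t_2,\ldots,t_n) - f^{[n-1]}(t_1,t_2,\ldots,t_n)}{t_0 - t_1},
\]
valid because $t_0\neq t_1$. The trick is now to insert the point $0$: using symmetry of $f^{[n-1]}$ to move $t_0$ (resp. $t_1$) into a convenient slot and applying the recursion once more (this time against the argument $0$, which is legitimate since $t_0\neq 0$, resp. $t_1\neq 0$), one obtains
\[
f^{[n-1]}(t_0,t_2,\ldots,t_n) = t_0\, f^{[n]}(0,t_0,t_2,\ldots,t_n) + f^{[n-1]}(0,t_2,\ldots,t_n),
\]
and similarly with $t_0$ replaced by $t_1$. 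Subtracting these two relations, the common term $f^{[n-1]}(0,t_2,\ldots,t_n)$ cancels, and dividing by $t_0-t_1$ yields
\[
f^{[n]}(t_0,\ldots,t_n) = \frac{t_0}{t_0-t_1}\, f^{[n]}(0,t_0,t_2,\ldots,t_n) - \frac{t_1}{t_0-t_1}\, f^{[n]}(0,t_1,t_2,\ldots,t_n).
\]
Finally I would apply Lemma \ref{Lem=PreCrucial} to each term on the right, rewriting $f^{[n]}(0,s_1,\ldots,s_{n-1}) = g^{[n-1]}(s_1,\ldots,s_{n-1})$ with $g(t)=f^{[1]}(t,0)$; with the appropriate choice of arguments this is exactly \eqref{Eqn=Decompose} for $i=0$, $j=1$.

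The main obstacle is bookkeeping rather than conceptual: one must verify that every application of the recursion \eqref{Eqn=DividedDifference} is to a pair of \emph{distinct} arguments (which is why the hypotheses $t_i\neq t_j$, $t_i\neq 0$, $t_j\neq 0$ are imposed), and one must be careful that $f\in\Cn$ only guarantees $n$ times differentiability away from $0$, so that $f^{[n]}$ is only defined (and continuous) on $\mathbb{R}^{n+1}\setminus\{0\}$ — but this is harmless here since we evaluate $f^{[n]}$ at points $(0,t_0,t_2,\ldots,t_n)$ and $(0,t_1,t_2,\ldots,t_n)$ which are nonzero precisely because $t_0\neq 0$ and $t_1\neq 0$. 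A secondary point to check is that Lemma \ref{Lem=PreCrucial}, stated for $f^{[n]}(0,t_1,\ldots,t_n)$, applies verbatim after the relevant permutation of variables, which is justified by \eqref{Eqn=Sym}. The case $t_n=t_{n+1}$-type degeneracies do not arise here since we never subdivide the variables $t_2,\ldots,t_n$.
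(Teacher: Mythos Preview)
Your proposal is correct and follows essentially the same route as the paper: both arguments insert the auxiliary node $0$ via the divided-difference recursion, cancel the common term $f^{[n-1]}(0,t_2,\ldots,t_n)$, and then invoke Lemma~\ref{Lem=PreCrucial} (together with symmetry) to pass from $f^{[n]}(\ldots,0,\ldots)$ to $g^{[n-1]}$. The only cosmetic difference is that you first reduce to $i=0$, $j=1$ by symmetry and compute there, whereas the paper carries out the identical algebraic manipulation directly for arbitrary $i,j$; your reduction is valid since both sides of \eqref{Eqn=Decompose} transform correctly under permutations (using \eqref{Eqn=Sym} for $f^{[n]}$ and for $g^{[n-1]}$).
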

\begin{proof}
Since $t_i \not = t_j$ not all variables are equal, so we are not in the second case of the defining relation for $f^{[n]}$, see \eqref{Eqn=DividedDifference}.
By \eqref{Eqn=Sym} we have
\[
f^{[n-1]}(t_0, \ldots, t_{i-1},  t_{i+1}, \ldots, t_{j-1}, 0 ,t_{j+1} \ldots,    t_{n}) = f^{[n-1]}(t_0, \ldots,  t_{i-1}, 0, t_{i+1} \ldots, t_{j-1},   t_{j+1}, \ldots,    t_{n}).
\]
We have by using \eqref{Eqn=DividedDifference} and \eqref{Eqn=Sym} for the first and last equality,
\[
\begin{split}
& f^{[n]}(t_0, \ldots, t_n)\\
 = & \frac{ f^{[n-1]}(t_0, \ldots,t_{j-1}, t_{j+1} \ldots  t_{n})  - f^{[n-1]}(t_0, \ldots,t_{i-1}, t_{i+1} \ldots  t_{n})  }{t_i - t_j} \\
= &
  \frac{t_i}{t_i - t_j}  \frac{  f^{[n-1]}(t_0, \ldots,t_{j-1}, t_{j+1} \ldots  t_{n}) - f^{[n-1]}(t_0, \ldots, t_{i-1},  t_{i+1}, \ldots, t_{j-1}, 0 ,t_{j+1} \ldots,    t_{n})}{t_i} \\
 & + \frac{t_j}{ t_i - t_j}   \frac{ f^{[n-1]}(t_0, \ldots,  t_{i-1}, 0, t_{i+1} \ldots, t_{j-1},   t_{j+1}, \ldots,    t_{n})  - f^{[n-1]}(t_0, \ldots,t_{i-1}, t_{i+1} \ldots  t_{n})    }{t_j} \\
  = &  \frac{t_i}{t_i - t_j} f^{[n]}(t_0, \ldots, t_{j-1}, 0,  t_{j+1}, \ldots, t_n ) - \frac{t_j}{ t_i - t_j}  f^{[n]}(t_0, \ldots, t_{i-1}, 0,  t_{i+1}, \ldots, t_n ).
 \end{split}
\]

\end{proof}

Let $f \in \Cn$.  Now define inductively $f_0 = f$ and then
\begin{equation}\label{Eqn=Fl}
f_l(t) = f_{l-1}^{[1]}(t,0), \qquad t \in \mathbb{R}, 1 \leq l < n.
\end{equation}

\begin{lem}\label{Lem=PostCrucial}
Let $f \in \Cn$. We have for $l=0,1, \ldots, n-1$,
\begin{equation}
f^{[n]}( t_0, \ldots, t_{n-l}, 0, \ldots, 0) = f^{[n-l]}_{l}(t_{0},  \ldots, t_{n-l}),
\end{equation}
where $t_0,  \ldots, t_{n-l} \in \mathbb{R}$. Here $f^{[n]} = f^{[n,n]}$ and $f_l^{[n-l]}= f_l^{[n-l, n-l]}$.
\end{lem}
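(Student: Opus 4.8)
The plan is to prove the identity by induction on $l$, using Lemma \ref{Lem=PreCrucial} to strip off one zero at a time and the permutation invariance \eqref{Eqn=Sym} of divided differences to move the zeros into the position where that lemma applies.

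Before running the induction I would dispose of two preliminary points. First, one must check that the right-hand side even makes sense, i.e.\ that $f_l$ is regular enough for $f_l^{[n-l,n-l]}$ to be defined through \eqref{Eqn=DividedDifference}. I claim that if $\varphi \in C^{m-1}(\mathbb{R}) \cap C^{m}(\mathbb{R}\backslash\{0\})$ with $m \geq 2$, then $\widetilde{\varphi}(t) := \varphi^{[1]}(t,0)$ lies in $C^{m-2}(\mathbb{R}) \cap C^{m-1}(\mathbb{R}\backslash\{0\})$: on $\mathbb{R}\backslash\{0\}$ this is clear since there $\widetilde{\varphi}(t) = (\varphi(t)-\varphi(0))/t$ is a quotient of $C^m$-functions with nonvanishing denominator, while near $0$ one writes $\widetilde{\varphi}(t) = \int_0^1 \varphi'(st)\, ds$ and differentiates under the integral sign, using $\varphi' \in C^{m-2}(\mathbb{R})$. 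Iterating this along the definition \eqref{Eqn=Fl} gives $f_l \in C^{n-l-1}(\mathbb{R}) \cap C^{n-l}(\mathbb{R}\backslash\{0\})$ for $0 \leq l \leq n-1$, which is exactly the regularity needed to form $f_l^{[n-l,n-l]}$. Second, although Lemma \ref{Lem=PreCrucial} is stated with the ambient $n$ fixed, its proof is an induction on the order of the divided difference which never uses the value of $n$; hence I may use the following general-order version: for $2 \le m \le n$ and $\varphi \in C^{m-1}(\mathbb{R}) \cap C^m(\mathbb{R}\backslash\{0\})$ one has $\varphi^{[m,m]}(0,t_1,\dots,t_m) = \widetilde{\varphi}^{[m-1,m-1]}(t_1,\dots,t_m)$ with $\widetilde{\varphi}(t) = \varphi^{[1]}(t,0)$.

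With these in hand the induction is short. For $l=0$ there is nothing to prove, since $f_0=f$ and no zeros are inserted. Assume the identity for some $l \in \{0,\dots,n-2\}$ and fix $t_0,\dots,t_{n-l-1} \in \mathbb{R}$. Specialising the inductive hypothesis at $t_{n-l}=0$ gives
\[
f^{[n]}\bigl(t_0,\dots,t_{n-l-1},0,\underbrace{0,\dots,0}_{l}\bigr) = f_l^{[n-l]}(t_0,\dots,t_{n-l-1},0);
\]
by \eqref{Eqn=Sym} the right-hand side equals $f_l^{[n-l]}(0,t_0,\dots,t_{n-l-1})$; and the general-order Lemma \ref{Lem=PreCrucial}, applied to $\varphi=f_l$ at order $m=n-l\geq 2$, rewrites this as $h^{[n-l-1]}(t_0,\dots,t_{n-l-1})$ with $h(t) = f_l^{[1]}(t,0)=f_{l+1}(t)$ by \eqref{Eqn=Fl}. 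This is precisely the identity for $l+1$. Note that since each identity invoked holds for every value of its arguments, the degenerate configuration $t_0=\dots=t_{n-l}=0$ requires no separate treatment.

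I expect the only genuine obstacle to be the bookkeeping in the preliminary step: tracking the loss of one order of differentiability each time one passes from $f_{l}$ to $f_{l+1}$, and keeping straight that Lemma \ref{Lem=PreCrucial} is being invoked at the reduced order $n-l$ rather than at $n$ (and that, because $l \le n-2$ throughout the inductive step, $f_l$ is in particular $C^1$ near $0$, so the two conventions for $f_l^{[1]}(t,0)$ coincide). Once that is set up cleanly, the rest is a direct unfolding of the definitions.
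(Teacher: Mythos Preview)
Your proof is correct and follows essentially the same approach as the paper: induction on $l$, using symmetry \eqref{Eqn=Sym} to move a zero to the front and then invoking Lemma~\ref{Lem=PreCrucial} at order $n-l$ to pass from $f_l$ to $f_{l+1}$. The paper's version is terser and omits the regularity check you include (that $f_l \in C^{n-l-1}(\mathbb{R}) \cap C^{n-l}(\mathbb{R}\backslash\{0\})$ and that Lemma~\ref{Lem=PreCrucial} applies at the reduced order), but the inductive argument is identical.
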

\begin{proof}
The proof follows by induction on $l$. If $l = 0$ the statement is trivial.  Suppose that the corollary is proved for $l$. We shall prove it for $l+1$. Indeed by induction, symmetry of the variables \eqref{Eqn=Sym} and  Lemma \ref{Lem=PreCrucial} applied to the function $f^{[n-l]}_{l}$ we have
\[
\begin{split}
& f^{[n]}( t_0, \ldots, t_{n-l-1}, 0, \ldots, 0) =   f^{[n-l]}_{l}( t_{0},  \ldots, t_{n-l-1}, 0) \\
= &
f^{[n-l]}_{l}(0, t_{0},  \ldots, t_{n-l-1}) =
 f^{[n-l-1]}_{l+1}(  t_{0},  \ldots, t_{n-l-1}  ).
\end{split}
\]
\end{proof}

\section{Main results}\label{Sect=MainTheorem}

The aim of this section is the proof of  Theorem \ref{Thm=Ultimate}. The theorem remarkably reduces the problem of estimating multiple operator integrals of divided differences $T_{f^{[n]}}^{(A_0, \ldots, A_n)}$ to the case that $A := A_0 = \ldots = A_n$ and $A \geq 0$.  We shall see in Section \ref{Sect=Consequences} that for functions that are close to the generalized absolute value map this reduction is sufficient to obtain weak type $(1,1)$ estimates.

\subsection{Main theorem in a special case}   We assume in this subsection that $A := A_0 = \ldots = A_n$ is in $B(H)_{sa}$ with $0 \not \in \sigma_p(A)$  where $\sigma_p(A)$ is the point spectrum of $A$.
Let $1<p_1, \ldots, p_n< \infty$ such that $\sum_{k=1}^n \frac{1}{p_k} = 1$. Take $x_k \in \cS_{p_k} \cap \cS_2$. Let $\mathcal{A} \subseteq \{ 0, \ldots, n\}$ and set
\[
x_k^{\mathcal{A}} =
\left\{
\begin{array}{ll}
\chi_{(0, \infty)}(A) x_k  \chi_{(0, \infty)}(A),& k-1,k \in \mathcal{A}, \\
\chi_{(0, \infty)}(A) x_k  \chi_{(-\infty, 0)}(A), & k-1 \in \mathcal{A}, k \not \in \mathcal{A}, \\
\chi_{(-\infty, 0)}(A) x_k \chi_{(0, \infty)}(A), & k-1 \not \in \mathcal{A}, k \in \mathcal{A}, \\
\chi_{(-\infty, 0)}(A) x_k \chi_{(-\infty, 0)}(A), & k-1, k \not \in \mathcal{A}. \\
\end{array}
\right.
\]
Since we assumed $0 \not \in \sigma_p(A)$ we have $1 = P_- + P_+$ with $P_- = \chi_{-(\infty, 0)}(A)$ and $P_+ =  \chi_{(0, \infty)}(A)$.
 Let $f \in \Cn$ and assume that $f^{[n]}$ is bounded.
Since $0 \not \in \sigma_p(A)$ multi-linearity of the multiple operator integral gives
\begin{equation}\label{Eqn=TADecomposition}
\begin{split}
& T^{(A, \ldots, A)}_{f^{[n]}}( x_1, \ldots, x_n) \\
= &
T^{(A, \ldots, A)}_{f^{[n]}}( ( P_- + P_+ ) x_1 ( P_- + P_+ ), \ldots, ( P_- + P_+ ) x_n ( P_- + P_+ ) ) \\
= &
 \sum_{\mathcal{A} \subseteq \{0, \ldots, n\}}  T^{(A, \ldots, A)}_{f^{[n]}}(x_1^{\mathcal{A}}, \ldots, x_n^{\mathcal{A}}).
\end{split}
\end{equation}
The $\cS_{1, \infty}$-norms of these summands where $\mathcal{A} \not = \emptyset, \mathcal{A} \not = \{0, \ldots, n\}$ turn out to be much easier to estimate.
  Recall the auxiliary functions
\[
\rho(s) = \frac{\vert s_0 \vert}{ \vert s_0 \vert + \vert s_1 \vert}, \qquad \psi(s) = \frac{\vert s_1 \vert}{\vert s_0 \vert + \vert s_1 \vert}, \qquad s = (s_0, s_1) \in \mathbb{R}^2\backslash \{ (0,0) \}.
\]
Further $\rho(0,0) = \psi(0,0) =  0$.
Further, recall that $f_l$ was defined in \eqref{Eqn=Fl}.

\begin{lem}\label{Lem=Induction}
Let $f \in \Cn$ with $f^{[n]}$ bounded, let $A \in B(H)_{sa}, 0 \not \in \sigma_p(A)$ and let  $1<p_1, \ldots, p_n< \infty$ be such that $\sum_{k=1}^n \frac{1}{p_k} = 1$.
Suppose that  $\mathcal{A} \not = \emptyset, \mathcal{A}  \not = \{0, \ldots, n\}$. Then there exists an absolute constant $C >0$ such that for $x_k \in \cS_{p_k} \cap \cS_2$,
\begin{equation}\label{Eqn=FGEstimate}
\begin{split}
\Vert T^{(A, \ldots, A)}_{f^{[n]}}(x_1^{\mathcal{A}}, \ldots, x_n^{\mathcal{A}})  \Vert_{\cS_{1, \infty}}
\leq & C \max_{1 \leq k \leq n} \{ p_k, p_k' \}  \cdot M(f;A) \cdot  \prod_{k=1}^n \Vert x_k \Vert_{\cS_{p_k}}.
\end{split}
\end{equation}
Where $M(f; A)$ is the maximum of the terms:
\begin{equation}\label{Eqn=MExpression}
\begin{split}
 &\max_{1 \leq k \leq n-1} \Vert T^{(A, \ldots, A)}_{f^{[n-1]}_1} : \cS_{p_1} \times \ldots \times \cS_{p_{k-1}} \times \cS_{ \frac{p_k p_{k+1}}{p_k + p_{k+1}} } \times \cS_{p_{k+2}} \times \ldots \times \cS_{p_n} \rightarrow \cS_{1, \infty}\Vert, \\
& \Vert T^{(A, \ldots, A)}_{f^{[n-1]}_1} : \cS_{p_1} \times \ldots  \times \cS_{p_{n-1}} \rightarrow \cS_{q_1} \Vert,
   \Vert T^{(A, \ldots, A)}_{f^{[n-1]}_1} : \cS_{p_2} \times \ldots  \times \cS_{p_n} \rightarrow \cS_{q_2} \Vert,   \\
\end{split}
\end{equation}
with  $\frac{1}{q_1} = \sum_{i=1}^{n-1} \frac{1}{p_i}, \frac{1}{q_2} = \sum_{i=2}^{n} \frac{1}{p_i}$.
\end{lem}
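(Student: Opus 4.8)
The plan is to localise the symbol $f^{[n]}$ to the spectral region cut out by the projections defining $x_1^{\mathcal{A}},\dots,x_n^{\mathcal{A}}$, apply there the decomposition of Lemma~\ref{lem=crucial} (whose function $g=f_1$ by \eqref{Eqn=Fl}), and so write $T^{(A,\dots,A)}_{f^{[n]}}(x_1^{\mathcal{A}},\dots,x_n^{\mathcal{A}})$ as a sum of two operator integrals, each of which collapses to an $(n-1)$-fold operator integral of $f_1^{[n-1]}$ with one argument pre-processed by $T^{(A,A)}_\rho$ or $T^{(A,A)}_\psi$. A preliminary ingredient is the estimate: for every $B\in B(H)_{sa}$ and $1<p<\infty$,
\[
\Vert T^{B,B}_{\rho}:\cS_p\to\cS_p\Vert\le C_0\max\{p,p'\},\qquad \Vert T^{B,B}_{\psi}:\cS_p\to\cS_p\Vert\le C_0\max\{p,p'\},
\]
with $C_0$ absolute. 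Since $\Vert T^{B,B}_{\rho}:\cS_2\to\cS_2\Vert=\Vert\rho\Vert_\infty\le 1$, interpolating this with the weak type $(1,1)$ bound of Lemma~\ref{Lem=PsiBoundNew} (Marcinkiewicz) handles $1<p\le 2$ with constant $\lesssim p'$; the range $p\ge 2$ then follows by trace duality, the bilinear adjoint of $T^{B,B}_\rho$ being $T^{B,B}_\psi$, which obeys the same weak $(1,1)$ bound.

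Since $\emptyset\ne\mathcal{A}\ne\{0,\dots,n\}$, there is an index $1\le k\le n$ at which exactly one of $k-1,k$ lies in $\mathcal{A}$; set $\{i,j\}=\{k-1,k\}$ with $i\in\mathcal{A}$, $j\notin\mathcal{A}$. The projections $\chi_{(0,\infty)}(A),\chi_{(-\infty,0)}(A)$ in $x_1^{\mathcal{A}},\dots,x_n^{\mathcal{A}}$ match consistently along the string $E_{t_0}x_1^{\mathcal{A}}E_{t_1}\cdots x_n^{\mathcal{A}}E_{t_n}$, so in $T^{(A,\dots,A)}_{f^{[n]}}(x_1^{\mathcal{A}},\dots,x_n^{\mathcal{A}})$ the symbol $f^{[n]}$ may be replaced by its restriction to the set where $t_m>0$ for $m\in\mathcal{A}$ and $t_m<0$ for $m\notin\mathcal{A}$ (here $0\notin\sigma_p(A)$ is used, so $\{0\}$ is $\lambda_A$-null). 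On this set $t_i>0>t_j$, Lemma~\ref{lem=crucial} applies $\lambda_A$-a.e., and since $\frac{t_i}{t_i-t_j}=\rho(t_i,t_j)$ and $-\frac{t_j}{t_i-t_j}=\psi(t_i,t_j)$ there, using also Lemma~\ref{Lem=PreCrucial} to recognise $f^{[n]}$ with a variable set to $0$ as a divided difference of $f_1$, I obtain
\[
T^{(A,\dots,A)}_{f^{[n]}}(x^{\mathcal{A}})=T^{(A,\dots,A)}_{\,\rho(t_i,t_j)\,f_1^{[n-1]}(\dots\widehat{t_j}\dots)}(x^{\mathcal{A}})+T^{(A,\dots,A)}_{\,\psi(t_i,t_j)\,f_1^{[n-1]}(\dots\widehat{t_i}\dots)}(x^{\mathcal{A}}),
\]
with $x^{\mathcal{A}}=(x_1^{\mathcal{A}},\dots,x_n^{\mathcal{A}})$ and $\widehat{\,\cdot\,}$ marking an omitted variable.

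I then treat both summands alike. In the first, the omitted variable $t_j$ sits only in the factor $\rho(t_i,t_j)$, and $i,j$ are adjacent; by the standard operator-integral calculus (which lets one reduce to $A$ of finite spectrum, where the identities are direct computations with finite sums), performing the partial sum over $E_{t_j}$ rewrites this summand as $T^{(A,\dots,A)}_{f_1^{[n-1]}}$ evaluated at $x_1^{\mathcal{A}},\dots,x_n^{\mathcal{A}}$ with the consecutive pair flanking $t_j$ — one member of which is $x_k^{\mathcal{A}}$ — replaced by their product, $x_k^{\mathcal{A}}$ being first acted on by $T^{(A,A)}_\rho$; in the boundary cases $j=0$ or $j=n$ this pair is incomplete and one gets instead $T^{(A,A)}_\rho(x_k^{\mathcal{A}})$ times $T^{(A,\dots,A)}_{f_1^{[n-1]}}$ of the remaining $n-1$ operators, on the left respectively the right. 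The second summand is the same with $\rho$ replaced by $\psi$. Each resulting expression therefore has one of exactly the three shapes matching the three families of norms in \eqref{Eqn=MExpression}, and, by Hölder's inequality (with $\frac1{p_k}+\frac1{p_{k\pm1}}=\frac{p_k+p_{k\pm1}}{p_kp_{k\pm1}}$, and using $\cS_1\subseteq\cS_{1,\infty}$ for the product shapes), is bounded in $\cS_{1,\infty}$ by $M(f;A)\cdot\Vert T^{(A,A)}_{\rho}(x_k^{\mathcal{A}})\Vert_{p_k}\cdot\prod_{l\ne k}\Vert x_l\Vert_{p_l}$ (or with $\psi$ in place of $\rho$). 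Combining with $\Vert T^{(A,A)}_{\rho}(x_k^{\mathcal{A}})\Vert_{p_k}\le C_0\max\{p_k,p_k'\}\Vert x_k\Vert_{p_k}$ from the preliminary bound and $\Vert x_l^{\mathcal{A}}\Vert_{p_l}\le\Vert x_l\Vert_{p_l}$, adding the two estimates and using the quasi-triangle inequality for $\cS_{1,\infty}$ (an extra factor $2$) yields \eqref{Eqn=FGEstimate}.

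The main obstacle is the bookkeeping inside the operator-integral calculus: carefully justifying the passage to the two-term sum above (which rests on the compressions localising every spectral variable) and the collapse of each summand to an $(n-1)$-fold operator integral with one pre-processed argument, and then tracking the degenerate positions $k=1$, $k=n$ that produce the two product-shaped families in \eqref{Eqn=MExpression}. The analytic content — the sharp $\cS_p$-boundedness of $T^{(A,A)}_\rho$ and $T^{(A,A)}_\psi$ with $\max\{p,p'\}$-growth — is only a routine interpolation–duality consequence of Lemma~\ref{Lem=PsiBoundNew}.
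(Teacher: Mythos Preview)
Your proposal is correct and follows essentially the same route as the paper: localise via the sign projections to force $t_i>0>t_j$ at an adjacent pair, invoke Lemma~\ref{lem=crucial} there to split $f^{[n]}$ into the two $\rho/\psi$-weighted pieces, collapse each to an $(n-1)$-fold integral of $f_1^{[n-1]}$ with $x_k^{\mathcal A}$ pre-processed by $T_\rho^{A,A}$ or $T_\psi^{A,A}$ (handling the extremal positions as products), and finish with H\"older and the $\cS_p$-bound on $T_\rho,T_\psi$ obtained by interpolation from Lemma~\ref{Lem=PsiBoundNew}. Your justification of the $\cS_p$-bound via Marcinkiewicz interpolation plus the duality $T_\rho^\ast=T_\psi$ is in fact slightly more explicit than the paper's one-line appeal to ``complex interpolation''.
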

\begin{proof}
Since we may multiply $f$ with a positive scalar we may assume without loss of generality that $M(f;A) = 1$.

Throughout the entire proof, fix $0 \leq k < n$ such that $k \in \mathcal{A}$ and $k+1 \not \in \mathcal{A}$ (or $k+1\in \mathcal{A}$ and $k\not \in \mathcal{A}$). We assume the first case, the second case can be proved similarly.
We then have 
$$x_{k+1}^{\mathcal{A}}=\chi_{(0,\infty)}(A)x_{k+1}^{\mathcal{A}}\chi_{(-\infty, 0 )}(A)$$
and, therefore,  
$$T_{f^{[n]}}^{(A, \ldots, A)}(x_1^{\mathcal{A}}, \ldots, x_n^{\mathcal{A}} ) =T_{f^{[n]}}^{(A, \ldots, A)}(x_1^{\mathcal{A}}, \ldots, x_k^{\mathcal{A}},\chi_{(0,\infty)}(A)x_{k+1}^{\mathcal{A}}\chi_{(-\infty, 0 )}(A),x_{k+2}^{\mathcal{A}},\cdots,x_n^{\mathcal{A}} )=$$
$$=T_{f^{[n]}}^{(\overbrace{A, \ldots, A}^{k \textrm{ terms}}, \chi_{(0,\infty)}(A) A,  \chi_{(-\infty, 0 )}(A)A, \overbrace{A, \ldots, A}^{n-k-1 \textrm{ terms}})}(x_1^{\mathcal{A}}, \ldots, x_n^{\mathcal{A}} ).
$$
Consequently, this expression only depends on the values $f^{[n]}(t_0, \ldots, t_n)$ with $t_k >0$ and $t_{k+1} <0$.
For $t_k > 0$ and $t_{k+1} < 0$ we have by Lemma \ref{lem=crucial} that
\[
\begin{split}
f^{[n]}(t_0, \ldots, t_n)
= &
\rho(t_k, t_{k+1}) f^{[n-1]}_1(t_0, \ldots, t_{k-1}, t_k, t_{k+2}, \ldots, t_n)\\
& +
\psi(t_k, t_{k+1}) f^{[n-1]}_1(t_0, \ldots, t_{k-1}, t_{k+1}, t_{k+2}, \ldots, t_n).
\end{split}
\]
Therefore if $0 < k  < n-1$ we have\footnote{ Here, we are using the equality
$$T^{(A,\cdots,A)}_{h_1}(V_1,\cdots,V_n)=T^{(A,\cdots,A)}_{h_2}(V_1,\cdots,V_k,T^{(A,A)}_{h_3}(V_{k+1}),V_{k+2},\cdots,V_n),$$
which is valid whenever
$$h_1(t_0,\cdots,t_n)=h_3(t_k,t_{k+1})\cdot h_2(t_0,\cdots,t_n),$$
and the observation
$$T^{\overbrace{(A,\cdots,A)}^{n+1 \textrm{ terms}}}_h(V_1,\cdots,V_n)=T^{\overbrace{(A,\cdots,A)}^{n \textrm{ terms}}}_h(V_1,\cdots,V_k,V_{k+1}\cdot V_{k+2},V_{k+3},\cdots,V_n),$$
which is valid whenever $h$ does not depend on the $(k+1)$-st variable. These equalities can be verified directly when $h = h_0 \otimes \ldots \otimes h_n$ is an elementary tensor product; then the general case follows from weak-$\ast$-continuity just as in the proofs of Lemma \ref{Lem=Indicator} and Proposition \ref{Prop=Amplification}.   See also \cite[Lemma 3.2]{PSS}.
} that
\[
\begin{split}
T_{f^{[n]}}^{(A, \ldots, A)}(x_1^{\mathcal{A}}, \ldots, x_n^{\mathcal{A}} )
= &
T_{f_1^{[n-1]}}^{(A, \ldots, A)}(x_1^{\mathcal{A}}, \ldots, x_{k-1}^{\mathcal{A}}, x_k^{\mathcal{A}}, T_\rho^{A,A}(x_{k+1}^{\mathcal{A}}) \cdot x_{k+2}^{\mathcal{A}},  x_{k+3}^{\mathcal{A}},\ldots,    x_n^{\mathcal{A}} ) \\
& + T_{f_1^{[n-1]}}^{(A, \ldots, A)}(x_1^{\mathcal{A}}, \ldots, x_{k-1}^{\mathcal{A}}, x_k^{\mathcal{A}} \cdot  T_\psi^{A,A}(x_{k+1}^{\mathcal{A}}), x_{k+2}^{\mathcal{A}}, \ldots,    x_n^{\mathcal{A}} ).
\end{split}
\]
In case $k = 0$ we have
\begin{equation}\label{Eqn=One}
T_{f^{[n]}}^{(A, \ldots, A)}(x_1^{\mathcal{A}}, \ldots, x_n^{\mathcal{A}} ) = T_{f_1^{[n-1]}}^{(A, \ldots, A)}( T_\rho^{A}(x_{1}^{\mathcal{A}}) \cdot  x_2^{\mathcal{A}}, \ldots, x_{n}^{\mathcal{A}} )
+  T_\psi^{A,A}(x_{1}^{\mathcal{A}})  \cdot T_{f_1^{[n-1]}}^{(A, \ldots, A)}(x_2^{\mathcal{A}}, \ldots,  x_{n}^{\mathcal{A}}   ).
\end{equation}
In case $k = n-1$ we have
\begin{equation}\label{Eqn=Two}
T_{f^{[n]}}^{(A, \ldots, A)}(x_1^{\mathcal{A}}, \ldots, x_n^{\mathcal{A}} ) =
  T_{f_1^{[n-1]}}^{(A, \ldots, A)}(x_1^{\mathcal{A}}, \ldots, x_{n-1}^{\mathcal{A}} ) \cdot T_\rho^{A,A}(x_{n}^{\mathcal{A}})
+  T_{f_1^{[n-1]}}^{(A, \ldots, A)}(x_1^{\mathcal{A}}, \ldots, x_{n-2}^{\mathcal{A}}, x_{n-1}^{\mathcal{A}} \cdot  T_\psi^{A,A}(x_{n}^{\mathcal{A}}) ).
\end{equation}
Let us first consider the case $0 < k < n-1$; the cases $k = 0,  n-1$ can be proved similarly. We find from the quasi-triangle inequality and the assumption $M(f; A) \leq 1$ that
 \[
 \begin{split}
  \Vert T_{f^{[n]}}^{(A, \ldots, A)}(x_1^{\mathcal{A}}, \ldots, x_n^{\mathcal{A}}) \Vert_{\cS_{1, \infty}}
  \leq & 2 \Vert  T^{(A, \ldots, A)}_{f_1^{[n-1]}}(x_1^{\mathcal{A}}, \ldots, x_{k-1}^{\mathcal{A}}, x_k^{\mathcal{A}}, T_\rho^{A,A}(x_{k+1}^{\mathcal{A}}) x_{k+2}^{\mathcal{A}}, x_{k+3}^{\mathcal{A}}, \ldots, x_n^{\mathcal{A}} )  \Vert_{\cS_{1, \infty}} \\
  & + 2 \Vert  T_{f_1^{[n-1]}}^{(A, \ldots, A)}(x_1^{\mathcal{A}}, \ldots, x_{k-1}^{\mathcal{A}}, x_k^{\mathcal{A}}  T_\psi^{A,A}(x_{k+1}^{\mathcal{A}}  ), x_{k+2}^{\mathcal{A}}, \ldots, x_n^{\mathcal{A}}    ) \Vert_{\cS_{1, \infty}}\\
  \leq & 2 \prod_{l=1}^k \Vert x_l^{\mathcal{A}} \Vert_{\cS_{p_l}}   \Vert T_\rho^{A,A}(x_{k+1}^{\mathcal{A}})  x_{k+2}^{\mathcal{A}}  \Vert_{\cS_{ \frac{p_{k+1}p_{k+2} }{p_{k+1} + p_{k+2}} } } \prod_{l=k+3}^n \Vert x_l^{\mathcal{A}} \Vert_{\cS_{p_l}} \\
  & +  2 \prod_{l=1}^{k-1} \Vert x_l^{\mathcal{A}} \Vert_{\cS_{p_l}}   \Vert  x_{k}^{\mathcal{A}}  T_\psi^{A,A}(x_{k+1}^{\mathcal{A}})  \Vert_{\cS_{ \frac{p_{k}p_{k+1} }{p_{k} + p_{k+1}} } } \prod_{l=k+2}^n \Vert x_l^{\mathcal{A}} \Vert_{\cS_{p_l}} \\
  \leq & 2 \prod_{l=1}^n \Vert x_l \Vert_{\cS_{p_l}}  \left( \Vert T_\rho^{A,A} :  \cS_{p_{k+1}} \rightarrow  \cS_{p_{k+1}} \Vert  + \Vert T_\psi^{A,A}:  \cS_{p_{k+1}} \rightarrow  \cS_{p_{k+1}} \Vert \right).
 \end{split}
 \]
 By Lemma \ref{Lem=PsiBoundNew} and complex interpolation there is some absolute constant $C >0$ such that
 \[
 \Vert T_\rho^{A} :  \cS_{p_{k+1}} \rightarrow  \cS_{p_{k+1}} \Vert, \Vert T_\psi^{A} :  \cS_{p_{k+1}} \rightarrow  \cS_{p_{k+1}} \Vert \leq C \max \{ p_{k+1}, p_{k+1}' \}.
  \]
  This concludes the proof in case $0 < k < n-1$. The cases $k = 0, n-1$ can be proved completely analogously by estimating the expressions \eqref{Eqn=One} and \eqref{Eqn=Two}.
\end{proof}

\begin{dfn}
We define for $f \in \Cn$ with $f^{[n]}$ bounded and $1 \leq p_1, \ldots, p_n < \infty$ with $\sum_{l=1}^n \frac{1}{p_l} = 1$,
\[
\begin{split}
M_n(f, p_1, \ldots, p_n)  = & \sup_{A \in B(H)_{sa}} \Vert T^{(A, \ldots, A)}_{f^{[n]}}: \cS_{p_1} \times \ldots \times \cS_{p_n} \rightarrow \cS_{1, \infty}  \Vert, \\
M_n^+(f, p_1, \ldots, p_n)  = & \sup_{A \in B(H)_{sa}, A \geq 0} \Vert T^{(A, \ldots, A)}_{f^{[n]}}: \cS_{p_1} \times \ldots \times \cS_{p_n} \rightarrow \cS_{1, \infty}  \Vert, \\
M_n^-(f, p_1, \ldots, p_n)  = & \sup_{A \in B(H)_{sa}, A \leq 0} \Vert T^{(A, \ldots, A)}_{f^{[n]}}: \cS_{p_1} \times \ldots \times \cS_{p_n} \rightarrow \cS_{1, \infty}  \Vert.
\end{split}
\]
\end{dfn}

\begin{prop}\label{Prop=Ind}
Using the assumptions of Lemma \ref{Lem=Induction} there exists a constant $C (p_1, \ldots, p_n) >0$ such that
\[
\begin{split}
& M_n(f, p_1, \ldots, p_n) \\
  \leq  & 4 M_n^+(f, p_1, \ldots, p_n)  + 4 M_n^-(f, p_1, \ldots, p_n)  + C (p_1, \ldots, p_n) \cdot \\
& \cdot \left(     \max_{1 \leq k \leq n-1}  M_{n-1}(f_1, p_1, \ldots, p_{k-1}, \frac{p_k p_{k+1}}{p_k + p_{k+1}} ,  p_{k+2}, \ldots,  p_n)
  +   \Vert f^{[n]} \Vert_\infty \right).
\end{split}
\]
\end{prop}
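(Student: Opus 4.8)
The plan is to expand $T^{(A,\ldots,A)}_{f^{[n]}}$ along the three spectral subspaces of $A$ carried by $(-\infty,0)$, $\{0\}$ and $(0,\infty)$, to sort the resulting summands into ``diagonal'', ``purely off-diagonal'' and ``zero-containing'' ones, and to estimate the three families separately. Since $(cf)_1 = cf_1$, each of $M_n(f)$, $M_n^\pm(f)$, $M_{n-1}(f_1,\cdot)$ and $\|f^{[n]}\|_\infty$ is homogeneous of degree one in $f$, so I may assume $\|f^{[n]}\|_\infty\le1$ (the case $f^{[n]}=0$ being trivial). Fix $A\in B(H)_{sa}$, set $P_+=\chi_{(0,\infty)}(A)$, $P_0=\chi_{\{0\}}(A)$, $P_-=\chi_{(-\infty,0)}(A)$, and fix $x_k\in\cS_{p_k}\cap\cS_2$ with $\|x_k\|_{p_k}\le1$. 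Multilinearity and $P_\epsilon P_{\epsilon'}=0$ for $\epsilon\ne\epsilon'$ give
\[
T^{(A,\ldots,A)}_{f^{[n]}}(x_1,\ldots,x_n)=\sum_{\vec\epsilon\in\{-,0,+\}^{n+1}} T^{(A,\ldots,A)}_{f^{[n]}}\!\big(P_{\epsilon_0}x_1P_{\epsilon_1},\,P_{\epsilon_1}x_2P_{\epsilon_2},\,\ldots,\,P_{\epsilon_{n-1}}x_nP_{\epsilon_n}\big).
\]
For the two diagonal terms $\vec\epsilon=(+,\ldots,+)$ and $\vec\epsilon=(-,\ldots,-)$ I use, exactly as in Lemma \ref{Lem=Indicator} and Proposition \ref{Prop=Amplification}, that $T^{(A,\ldots,A)}_{f^{[n]}}(P_+x_1P_+,\ldots,P_+x_nP_+)=T^{(P_+A,\ldots,P_+A)}_{f^{[n]}}(P_+x_1P_+,\ldots,P_+x_nP_+)$ with $P_+A\ge0$ (and symmetrically with $P_-$), so their $\cS_{1,\infty}$-norms are at most $M_n^+(f)$ and $M_n^-(f)$. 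Peeling these two summands off first with the quasi-triangle inequality of $\cS_{1,\infty}$ produces the factor $4=2\cdot2$ in front of $M_n^\pm(f)$, the remaining (finitely many) quasi-triangle steps only contributing a constant depending on $n$, absorbed into $C(p_1,\ldots,p_n)$.

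For the purely off-diagonal terms $\vec\epsilon\in\{-,+\}^{n+1}$ that are not constant, setting $\mathcal A=\{i:\epsilon_i=+\}$ identifies the summand with $T^{(A,\ldots,A)}_{f^{[n]}}(x_1^{\mathcal A},\ldots,x_n^{\mathcal A})$ for $\emptyset\ne\mathcal A\ne\{0,\ldots,n\}$, which Lemma \ref{Lem=Induction} bounds in $\cS_{1,\infty}$ by $C\max_k\{p_k,p_k'\}\,M(f;A)$, with $M(f;A)$ the maximum in \eqref{Eqn=MExpression}; note that the proof of Lemma \ref{Lem=Induction} estimates a single such summand and never uses $0\notin\sigma_p(A)$. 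The remaining summands have at least one $\epsilon_i=0$. For each zero slot $i$ the factor $\phi(A)$ sitting there is forced by the flanking $P_0$'s to equal $\phi(0)$, so by Lemma \ref{Lem=PreCrucial} and the symmetry \eqref{Eqn=Sym} the symbol $f^{[n]}$ may be replaced by its restriction $f^{[n]}(\ldots,0,\ldots)=f_1^{[n-1]}$ of the remaining $n$ variables, which no longer depends on $t_i$; the collapse identity from the footnote in the proof of Lemma \ref{Lem=Induction} then fuses the two operators flanking slot $i$ when $1\le i\le n-1$, and peels $x_1$ off on the left (resp.\ $x_n$ off on the right) when $i=0$ (resp.\ $i=n$). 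Iterating over all zero slots, and using Lemma \ref{Lem=PostCrucial} with \eqref{Eqn=Sym} to identify the iterated restrictions with $f_1,f_2,\ldots$, each such summand takes the form $y\,T^{(A,\ldots,A)}_{f_m^{[n-m]}}(z_1,\ldots,z_{n-m})\,y'$, where $m\ge1$ is the number of zero slots, $y$ (resp.\ $y'$) is the possibly empty product of the $x_l$ peeled at the left (resp.\ right) boundary, and $z_1,\ldots,z_{n-m}$ are the surviving operators, some of which are consecutive products $x_lP_0x_{l+1}$ of fused Schatten exponent $\frac{p_lp_{l+1}}{p_l+p_{l+1}}$. When no boundary slot is a zero slot, $y$ and $y'$ are empty, the reciprocals of the exponents of the $z_j$ still sum to $1$, and the inner factor is an honest weak-type multiple operator integral bounded by $M_{n-1}(f_1,p_1,\ldots,\frac{p_kp_{k+1}}{p_k+p_{k+1}},\ldots,p_n)$ (when $m=1$) or, for $m\ge2$, by a lower-order quantity $M_{n-m}(f_m,\cdot)$ at some exponent tuple with reciprocals summing to $1$; all of these are finite and, together with $\|f^{[n]}\|_\infty$, absorbed into $C(p_1,\ldots,p_n)(\cdots)$.

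The genuinely delicate case — and the step I expect to be the main obstacle — is a summand whose zero set meets the boundary: then the outer factors $y,y'$ carry away part of the total $\ell_1$-mass, and the inner factor $T^{(A,\ldots,A)}_{f_m^{[n-m]}}(z_1,\ldots,z_{n-m})$ must be estimated into a Schatten (or weak Schatten) class $\cS_{v}$ with $\tfrac1v=\sum_j(\text{reciprocal exponents of the }z_j)<1$ rather than into $\cS_{1,\infty}$. I handle this by complex interpolation. On one side, $\|f_m^{[n-m]}\|_\infty\le\|f^{[n]}\|_\infty\le1$ by Lemma \ref{Lem=PostCrucial} and \eqref{Eqn=Sym}, whence (passing from elementary tensors to general symbols by weak-$\ast$ continuity together with lower semicontinuity of the operator norm in the weak operator topology) $T^{(A,\ldots,A)}_{f_m^{[n-m]}}$ is bounded from $\cS_\infty\times\cdots\times\cS_\infty$ to $\cS_\infty$ with norm at most $1$; on the other side, for any tuple $\vec r$ with reciprocals summing to $1$ it is bounded from $\cS_{r_1}\times\cdots\times\cS_{r_{n-m}}$ to $\cS_{1,\infty}$ with norm at most $M_{n-m}(f_m,\vec r)$. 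Interpolating these two estimates on their common $\cS_2$-domain (as in \cite{CPSZ-AJM}, \cite{PSS}), with the interpolation parameter and the tuple $\vec r$ chosen so that the intermediate input exponents are exactly those demanded by the $z_j$, yields a bound from the required product of Schatten classes into $\cS_{v}$ (or the weak class $\cS_{v,\infty}$) of size controlled by $M_{n-m}(f_m,\vec r)+\|f^{[n]}\|_\infty$. Finally this re-multiplies against the outer factors back into $\cS_{1,\infty}$: by Hölder in the Lorentz scale, $\cS_{p_1}\cdot\cS_{v,\infty}\cdot\cS_{p_n}\subseteq\cS_{p_1}\cdot\cS_{p_1',\infty}\subseteq\cS_{1,p_1}\subseteq\cS_{1,\infty}$ (with the obvious one-sided variant when only one boundary is a zero slot). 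Collecting the three families, absorbing all $n$-dependent quasi-triangle constants and all the lower-order and differently-scaled contributions into $C(p_1,\ldots,p_n)(\,\cdots+\|f^{[n]}\|_\infty\,)$, and taking the supremum over $A\in B(H)_{sa}$ gives the stated inequality. The two technical points to be careful about are the legitimacy of the multilinear interpolation of $T^{(A,\ldots,A)}_{f_m^{[n-m]}}$ between the $\cS_\infty$- and $\cS_{1,\infty}$-valued estimates on the $\cS_2$-domain, and the bookkeeping that turns the quasi-triangle inequality of $\cS_{1,\infty}$ into the precise constant $4$ in front of $M_n^\pm(f)$.
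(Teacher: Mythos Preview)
Your three-way decomposition with $P_+,P_0,P_-$ is unnecessarily complicated, and the handling of the zero-containing summands contains a genuine error. The paper instead first restricts to $A$ with $0\notin\sigma_p(A)$, so that $P_0=0$ and only the $\{-,+\}^{n+1}$ decomposition \eqref{Eqn=TADecomposition} is needed; the two diagonal terms give $M_n^\pm(f)$, and every off-diagonal term is covered by Lemma~\ref{Lem=Induction}. The two ``boundary'' norms in $M(f;A)$ (the second line of \eqref{Eqn=MExpression}, namely $T_{f_1^{[n-1]}}:\cS_{p_1}\times\cdots\times\cS_{p_{n-1}}\to\cS_{q_1}$ and its mirror) are then bounded by $C_0(p_1,\ldots,p_n)\|f^{[n]}\|_\infty$ via \cite[Remark~5.4]{PSS}, and the passage to arbitrary $A\in B(H)_{sa}$ is Proposition~\ref{Prop=ApproxZero}. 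No zero-slot bookkeeping ever arises.

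The actual gap in your argument is the claimed $\cS_\infty$ endpoint: it is \emph{false} that $\|\phi\|_\infty\le1$ forces $T_\phi^{(A,\ldots,A)}:\cS_\infty\times\cdots\times\cS_\infty\to\cS_\infty$ with norm $\le1$. Already for double operator integrals this would say that every bounded function of two variables is a Schur multiplier on $B(H)$, contradicted for instance by the triangular projection. Your weak-$\ast$/lower-semicontinuity step fails because approximating $\phi$ by linear combinations of elementary tensors does not control the sum of the individual tensor norms, so no uniform $B(H)$ bound survives the limit. Without that endpoint your interpolation collapses, and with it the whole treatment of the ``delicate'' boundary-zero summands; the correct replacement is exactly the \cite{PSS} estimate the paper invokes. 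A secondary problem: your bound also produces quantities $M_{n-m}(f_m,\cdot)$ for $m\ge2$, which are not present in the proposition's inequality and cannot simply be ``absorbed'' into $C(p_1,\ldots,p_n)(\max_k M_{n-1}(f_1,\ldots)+\|f^{[n]}\|_\infty)$ without an additional inductive argument.
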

\begin{proof}
Take $A \in B(H)_{sa}$ with $0 \not \in \sigma_p(A)$. By \cite[Remark 5.4]{PSS} we have that there exists a constant $C_0(p_1, \ldots, p_n) > 0$ such that
\[
\begin{split}
\Vert T^{(A, \ldots, A)}_{f^{[n-1]}_1} : \cS_{p_1} \times \ldots  \times \cS_{p_{n-1}} \rightarrow \cS_{q_1} \Vert  <& C_0(p_1, \ldots, p_n) \Vert f^{[n]} \Vert_\infty, \\
   \Vert T^{(A, \ldots, A)}_{f^{[n-1]}_1} : \cS_{p_2} \times \ldots  \times \cS_{p_n} \rightarrow \cS_{q_2} \Vert <& C_0(p_1, \ldots, p_n) \Vert f^{[n]} \Vert_\infty,
\end{split}
\]
where $\frac{1}{q_1} = \sum_{i=1}^{n-1} \frac{1}{p_i}, \frac{1}{q_2} = \sum_{i=2}^{n} \frac{1}{p_i}$.  Therefore by Lemma \ref{Lem=Induction}  for $\mathcal{A} \not = \emptyset$ and $\mathcal{A} \not = \{ 0, \ldots, n\}$ there exists a constant $C_1(p_1, \ldots, p_n) >0$ such that for $x_k \in \cS_{p_k} \cap \cS_2$,
\begin{equation}\label{Eqn=ANonEmpty}
\begin{split}
& \Vert T^{(A, \ldots, A) }_{f^{[n]}}(x_1^{\mathcal{A}}, \ldots, x_n^{\mathcal{A}}) \Vert_{\cS_{1, \infty}} \\
\leq &  C_1(p_1, \ldots, p_n)  \left(     \max_{1 \leq k \leq n-1}  M_{n-1}(f_1, p_1, \ldots, p_{k-1}, \frac{p_k p_{k+1}}{p_k + p_{k+1}} ,  p_{k+2}, \ldots,  p_n)  +   \Vert f^{[n]} \Vert_\infty \right)  \prod_{k=1}^n \Vert x_k \Vert_{\cS_{p_k}}.
\end{split}
\end{equation}
Further for $\mathcal{A} = \emptyset$   we have
\[
T^{(A, \ldots, A) }_{f^{[n]}}(x_1^{\mathcal{A}}, \ldots, x_n^{\mathcal{A}}) = T^{(A_{-}, \ldots, A_-) }_{f^{[n]}}(x_1^{\mathcal{A}}, \ldots, x_n^{\mathcal{A}}), \qquad A_- = \chi_{(- \infty, 0]}(A).
\]
And similarly  for $\mathcal{A} = \{0, \ldots, n \} $   we have
\[
T^{(A, \ldots, A) }_{f^{[n]}}(x_1^{\mathcal{A}}, \ldots, x_n^{\mathcal{A}}) = T^{(A_{+}, \ldots, A_+) }_{f^{[n]}}(x_1^{\mathcal{A}}, \ldots, x_n^{\mathcal{A}}), \qquad A_+ = \chi_{[0, \infty) }(A).
\]
It follows that
\begin{equation}\label{Eqn=AEmpty}
\Vert T^{(A, \ldots, A) }_{f^{[n]}}(x_1^{\mathcal{A}}, \ldots, x_n^{\mathcal{A}}) \Vert_{\cS_{1, \infty}} \leq M_n^{\pm } (f, p_1, \ldots, p_n)   \prod_{k=1}^n \Vert x_k \Vert_{\cS_{p_k}},
\end{equation}
where $\pm = -$ if $\mathcal{A} = \emptyset$   and $\pm = +$ if $\mathcal{A} = \{0, \ldots, n\}$.
Then we use  \eqref{Eqn=TADecomposition} and the quasi-triangle inequality followed by estimates  \eqref{Eqn=ANonEmpty}, \eqref{Eqn=AEmpty} to get
\begin{equation}\label{Eqn=LatterEstimate}
\begin{split}
& \Vert T^{(A, \ldots, A)}_{f^{[n]}}(x_1, \ldots, x_n) \Vert_{\cS_{1, \infty}} \\
\leq &
2^{n+1} \sum_{ \substack{ \mathcal{A} \subseteq \{ 1, \ldots, n\},\\  \mathcal{A} \not = \emptyset,  \mathcal{A}  \not = \{ 1, \ldots, n \} } } \Vert T^{(A, \ldots, A) }_{f^{[n]}}(x_1^{\mathcal{A}}, \ldots, x_n^{\mathcal{A}}) \Vert_{\cS_{1, \infty}}
+ 4 \sum_{\mathcal{A}   = \emptyset,   \{ 1, \ldots, n \} } \Vert T^{(A, \ldots, A) }_{f^{[n]}}(x_1^{\mathcal{A}}, \ldots, x_n^{\mathcal{A}}) \Vert_{\cS_{1, \infty}} \\
\leq & \big(   2^{2n+1}  C_1(p_1, \ldots, p_n)  \left(    \max_{1 \leq k \leq n-1}  M_{n-1}(f_1, p_1, \ldots, p_{k-1}, \frac{p_k p_{k+1}}{p_k + p_{k+1}} ,  p_{k+2}, \ldots,  p_n) +   \Vert f^{[n]} \Vert_\infty \right)   \\
&     + 4 M_n^+(f, p_1, \ldots, p_n)  + 4 M_n^-(f, p_1, \ldots, p_n) \big)  \prod_{k=1}^n \Vert x_k \Vert_{\cS_{p_k}} \\
\end{split}
\end{equation}
This estimate \eqref{Eqn=LatterEstimate} is uniform in $A \in B(H)_{sa}$ with $0 \not \in \sigma_p(A)$. Therefore by Proposition \ref{Prop=ApproxZero} the estimate \eqref{Eqn=LatterEstimate} holds uniformly for every $A \in B(H)_{sa}$ which is exactly the desired estimate.
\end{proof}

For $\pi_1, \pi_2$ disjoint subsets of $\mathbb{N}$ we write $\pi_1 < \pi_2$ if every element in $\pi_1$ is (strictly) smaller than every element in $\pi_2$.

\begin{dfn}
We say that $(q_1, \ldots, q_k)$ is a consummation of $(p_1, \ldots, p_n)$ if there exists a partition $\{ \pi_1, \ldots, \pi_k \}$ of $\{1, \ldots, n\}$ with $\pi_1 <\pi_2 <  \ldots < \pi_{n}$ such that
\[
\frac{1}{q_k} = \sum_{l \in \pi_k} \frac{1}{p_l}.
\]
\end{dfn}

For $f \in \Cn$ with $f^{[n]}$ bounded and $1 \leq p_1, \ldots, p_n < \infty$ with $\sum_{l=1}^n \frac{1}{p_l}$ set
\begin{equation}
\begin{split}
L_n^+(f, p_1, \ldots, p_n) = & \sup \{  M_{n-k}^+(f_k, q_1, \ldots, q_k) \mid 0 \leq k < n  \}, \\
L_n^-(f, p_1, \ldots, p_n) = & \sup \{  M_{n-k}^-(f_k, q_1, \ldots, q_k) \mid 0 \leq k < n  \},
\end{split}
\end{equation}
where the suprema are taken over all consummations $(q_1, \ldots, q_k)$ of $(p_1, \ldots, p_n)$.

\begin{thm}\label{Thm=Ultimate}
Let $f \in \Cn$ with $f^{[n]}$ bounded and let  $1 \leq p_1, \ldots, p_n < \infty$ with $\sum_{l=1}^n \frac{1}{p_l} = 1$. There exists a constant $C(p_1, \ldots, p_n)>0$ such that for every $\boldA \in B(H)_{sa}^{\times n+1}$ we have
\begin{equation}\label{Eqn=MainEstimate}
\Vert T_{f^{[n]}}^{\boldA} : \cS_{p_1} \times \ldots \times \cS_{p_n} \Vert \leq C(p_1, \ldots, p_n) \left(
\Vert f^{[n]} \Vert_\infty + L_n^+(f, p_1, \ldots, p_n) + L_n^-(f, p_1, \ldots, p_n) \right).
\end{equation}
\end{thm}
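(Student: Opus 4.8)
The plan is to prove the theorem by induction on $n$. By Corollary~\ref{Cor=Amplify} one has $\sup_{\boldA}\Vert T^{\boldA}_{f^{[n]}}:\cS_{p_1}\times\ldots\times\cS_{p_n}\rightarrow\cS_{1,\infty}\Vert = M_n(f,p_1,\ldots,p_n)$, so it suffices to bound the latter; thus throughout we work with a single self-adjoint operator. Dividing $f$ by $\Vert f^{[n]}\Vert_\infty$ is harmless, since divided differences and every quantity in \eqref{Eqn=MainEstimate} are homogeneous in $f$, so we may assume $\Vert f^{[n]}\Vert_\infty\le1$; this makes Proposition~\ref{Prop=ApproxZero} applicable to $\phi=f^{[n]}$, which is continuous off the origin and vanishes there by \eqref{Eqn=DividedDifference}.

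\textbf{Base case $n=1$.} Here $\sum_l p_l^{-1}=1$ forces $p_1=1$ and $L_1^{\pm}(f,1)=M_1^{\pm}(f,1)$. By Corollary~\ref{Cor=Amplify} and Proposition~\ref{Prop=ApproxZero} we may assume $A_0=A_1=A$ with $0\notin\sigma_p(A)$, so $1=P_++P_-$ with $P_{\pm}=\chi_{\pm(0,\infty)}(A)$, and $T^{(A,A)}_{f^{[1]}}(x)=\sum_{\mathcal A\subseteq\{0,1\}}T^{(A,A)}_{f^{[1]}}(x^{\mathcal A})$. The summands $\mathcal A=\emptyset$ and $\mathcal A=\{0,1\}$ are realized, exactly as in the proof of Proposition~\ref{Prop=Ind}, by replacing $A$ with its negative, respectively positive, part, so they are dominated by $M_1^-(f,1)\Vert x\Vert_1$, respectively $M_1^+(f,1)\Vert x\Vert_1$. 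For a mixed summand, say $x^{\mathcal A}=P_+xP_-$, only the values $f^{[1]}(t_0,t_1)$ with $t_0>0>t_1$ enter, and there Lemma~\ref{lem=crucial} (with $i=0$, $j=1$) gives $f^{[1]}(t_0,t_1)=\rho(t_0,t_1)f_1(t_0)+\psi(t_0,t_1)f_1(t_1)$, hence
\[
T^{(A,A)}_{f^{[1]}}(P_+xP_-)=f_1(A)\,T_{\rho}^{A,A}(P_+xP_-)+T_{\psi}^{A,A}(P_+xP_-)\,f_1(A).
\]
Since $\Vert f_1(A)\Vert\le\Vert f_1\Vert_\infty=\sup_t|f^{[1]}(t,0)|\le\Vert f^{[1]}\Vert_\infty$, since $\cS_{1,\infty}$ is a $B(H)$-bimodule, and since $\Vert P_+xP_-\Vert_1\le\Vert x\Vert_1$, Lemma~\ref{Lem=PsiBoundNew} bounds this term by an absolute multiple of $\Vert f^{[1]}\Vert_\infty\Vert x\Vert_1$; the summand $x^{\mathcal A}=P_-xP_+$ is identical. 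Adding the four contributions with the quasi-triangle inequality in $\cS_{1,\infty}$ gives $M_1(f,1)\le C\big(\Vert f^{[1]}\Vert_\infty+M_1^+(f,1)+M_1^-(f,1)\big)$.

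\textbf{Inductive step.} Let $n\ge2$; then $\sum_l p_l^{-1}=1$ with all $p_l<\infty$ forces $1<p_l<\infty$ for every $l$, so Proposition~\ref{Prop=Ind} applies. Writing $\mathbf p=(p_1,\ldots,p_n)$ and $\mathbf p^{(k)}=(p_1,\ldots,p_{k-1},\tfrac{p_kp_{k+1}}{p_k+p_{k+1}},p_{k+2},\ldots,p_n)$ it gives
\[
M_n(f,\mathbf p)\le4M_n^+(f,\mathbf p)+4M_n^-(f,\mathbf p)+C(\mathbf p)\Big(\max_{1\le k<n}M_{n-1}(f_1,\mathbf p^{(k)})+\Vert f^{[n]}\Vert_\infty\Big).
\]
Each $\mathbf p^{(k)}$ is a consummation of $\mathbf p$ (merge two adjacent blocks), so $M_n^{\pm}(f,\mathbf p)\le L_n^{\pm}(f,\mathbf p)$. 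Next one checks that $f_1\in C^{n-2}(\mathbb{R})\cap C^{n-1}(\mathbb{R}\backslash\{0\})$ — it is $C^{n-1}$ away from $0$ as a quotient of $C^n$ functions with non-vanishing denominator, and $C^{n-2}$ on $\mathbb{R}$ because $f_1(t)=\int_0^1 f'(st)\,ds$ — and that $\Vert f_1^{[n-1]}\Vert_\infty\le\Vert f^{[n]}\Vert_\infty$, by Lemma~\ref{Lem=PostCrucial} (restrict $f^{[n]}$ to the hyperplane $t_n=0$). Hence the induction hypothesis applies to $f_1$ at the exponents $\mathbf p^{(k)}$:
\[
M_{n-1}(f_1,\mathbf p^{(k)})\le C(\mathbf p^{(k)})\big(\Vert f_1^{[n-1]}\Vert_\infty+L_{n-1}^+(f_1,\mathbf p^{(k)})+L_{n-1}^-(f_1,\mathbf p^{(k)})\big).
\]
It remains to observe that $L_{n-1}^{\pm}(f_1,\mathbf p^{(k)})\le L_n^{\pm}(f,\mathbf p)$: each quantity over which the left side takes its supremum is of the form $M_m^{\pm}(g,\mathbf q)$ with $1\le m\le n-1$, with $\mathbf q$ a consummation of $\mathbf p^{(k)}$ into $m$ blocks, and with $g$ one of the iterated shifts of $f_1$, which by \eqref{Eqn=Fl} is $f_{n-m}$; such a $\mathbf q$ is also a consummation of $\mathbf p$ into $m$ blocks (ordered set partitions compose), and since $m<n$ the quantity $M_m^{\pm}(f_{n-m},\mathbf q)$ is among those defining $L_n^{\pm}(f,\mathbf p)$. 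Combining the last three displays, taking the maximum over $k$ (the constant being defined recursively from the finitely many tuples $\mathbf p^{(1)},\ldots,\mathbf p^{(n-1)}$), and applying Corollary~\ref{Cor=Amplify} to return to an arbitrary $\boldA$, we obtain \eqref{Eqn=MainEstimate}.

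\textbf{The main obstacle.} No individual estimate is hard here: the analytic substance is entirely contained in Lemma~\ref{lem=crucial}, Lemma~\ref{Lem=PsiBoundNew} and Proposition~\ref{Prop=Ind}, all proved above. The delicate point is organizing the induction so that it closes — one must verify that the operation of merging two adjacent exponents together with the shift $f\mapsto f_1$ is compatible with the recursive definition of $L_n^{\pm}$, so that neither the constants nor the auxiliary quantities $L_n^{\pm}$ deteriorate under the recursion — together with the routine but necessary checks that $f_1$ inherits the smoothness required by the induction hypothesis, that $f_1^{[n-1]}$ remains bounded, and that the normalization $\Vert f^{[n]}\Vert_\infty\le1$ needed to invoke Proposition~\ref{Prop=ApproxZero} (inside Proposition~\ref{Prop=Ind} and in the base case) costs nothing.
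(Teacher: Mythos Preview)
Your proof is correct and follows the same overall strategy as the paper: iterate Proposition~\ref{Prop=Ind} down through the levels, absorbing the positive/negative pieces into $L_n^{\pm}$ and using $\Vert f_k^{[n-k]}\Vert_\infty\le\Vert f^{[n]}\Vert_\infty$ (via Lemma~\ref{Lem=PostCrucial}) at each step. The one genuine difference is the terminal step. The paper terminates the recursion at $k=n-1$ by quoting \cite[Theorem~1.2]{CPSZ-AJM}, which gives the absolute bound $M_1(f_{n-1},1)\le C\Vert f^{[n]}\Vert_\infty$; this is a heavy external result (the full weak $(1,1)$ Lipschitz estimate, proved there through non-commutative Calder\'on--Zygmund theory). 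You instead prove the $n=1$ case of the theorem by hand, using only the sign-splitting decomposition and Lemma~\ref{Lem=PsiBoundNew} (which comes from the more elementary \cite{CPSZ-JOT}), obtaining the weaker bound $M_1(g,1)\le C(\Vert g^{[1]}\Vert_\infty+M_1^+(g,1)+M_1^-(g,1))$; since the extra $M_1^{\pm}$ terms are already majorized by $L_n^{\pm}$, this suffices. Your route is therefore more self-contained and makes the theorem independent of \cite{CPSZ-AJM}, at the cost of writing out the $n=1$ decomposition explicitly (effectively reproving the $n=1$ analogue of Lemma~\ref{Lem=Induction}, where the hypothesis $p_l>1$ is not available). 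The remaining points you flag --- regularity of $f_1$, boundedness of $f_1^{[n-1]}$, and the compatibility $L_{n-1}^{\pm}(f_1,\mathbf p^{(k)})\le L_n^{\pm}(f,\mathbf p)$ via composition of consummations --- are all correctly handled and are exactly what is needed to make the induction close.
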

\begin{proof}
Proposition \ref{Prop=Ind} shows that  for every $0 \leq k < n-1$ and  any consummation $(q_1, \ldots, q_{n-k})$ of  $(p_1, \ldots, p_{n})$  there exists a constant $C(q_1, \ldots, q_{n-k})$ such that
\begin{equation}\label{Eqn=IndEstimate}
\begin{split}
& M_{n-k}(f_k, q_1, \ldots, q_{n-k}) \\
  \leq  & 4 L_n^+(f, p_1, \ldots, p_n)  + 4 L_n^-(f, p_1, \ldots, p_n) + C(q_1, \ldots, q_{n-k}) \cdot \\
& \cdot \left(     \max_{1 \leq k \leq n-1}  M_{n-k-1}( f_{k+1} , q_1, \ldots, q_{k-1}, \frac{q_k q_{k+1}}{q_k + q_{k+1}} ,  q_{k+2}, \ldots,  q_n)
  +   \Vert f^{[n]} \Vert_\infty \right).
\end{split}
\end{equation}
If $k = n-1$ then by \cite[Theorem 1.2]{CPSZ-AJM} there exists $C>0$ such that
\begin{equation}\label{Eqn=IndFinal}
M_{1}(f_{n-1}, 1) \leq C \Vert f^{[n]} \Vert_\infty.
\end{equation}
Applying the estimate \eqref{Eqn=IndEstimate}  inductively from $k=0$ to $k = n-2$ and using \eqref{Eqn=IndFinal} for $k = n-1$ we see that there is a constant $C(p_1, \ldots, p_n) >0$ such that
\[
M_{n}(f, p_1, \ldots, p_{n} ) \leq C(p_1, \ldots, p_n) ( L_n^+(f, p_1, \ldots, p_n)  +  L_n^-(f, p_1, \ldots, p_n) + \Vert f^{[n]} \Vert_\infty).
\]
This is the desired estimate  \eqref{Eqn=MainEstimate} for $\boldA = (A, \ldots, A), A \in B(H)_{sa}$. The general case follows from Corollary \ref{Cor=Amplify}.
\end{proof}

\section{Consequences of Theorem \ref{Thm=Ultimate}: Weak $(1,1)$ estimates for generalized absolute value functions} \label{Sect=Consequences}

We now arrive at the applications of Theorem \ref{Thm=Ultimate}.

 \begin{thm}\label{Thm=AbsValue}
Let $a(t) = \vert t \vert t^{n-1}, t \in \mathbb{R}$. Fix $1<p_1, \ldots, p_n< \infty$ such that $\sum_{l=1}^n \frac{1}{p_l} = 1$. Then there exists a constant $D >0$ such that for  every $\boldA \in B(H)_{sa}^{\times n +1}$ we have that
\begin{equation}
\Vert T^{\boldA}_{a^{[n]}}: \cS_{p_1} \times \ldots \times \cS_{p_n} \rightarrow \cS_{1, \infty}\Vert \leq D.
\end{equation}
\end{thm}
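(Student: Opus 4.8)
The plan is to derive the theorem from Theorem~\ref{Thm=Ultimate}. By that result it suffices to show that the three quantities $\Vert a^{[n]}\Vert_\infty$, $L_n^+(a,p_1,\ldots,p_n)$ and $L_n^-(a,p_1,\ldots,p_n)$ are finite (with a bound depending only on $p_1,\ldots,p_n$), since then $D := C(p_1,\ldots,p_n)\big(\Vert a^{[n]}\Vert_\infty + L_n^+(a,p_1,\ldots,p_n) + L_n^-(a,p_1,\ldots,p_n)\big)$ does the job.

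First I would check that $a\in\Cn$ and that $a^{[n]}$ is bounded. Writing $a(t)=\sgn(t)\,t^n$, the $j$-th derivative of $a$ on $\mathbb{R}\setminus\{0\}$ equals $\sgn(t)\frac{n!}{(n-j)!}t^{n-j}$, which extends continuously across $0$ precisely when $j\le n-1$; hence $a\in C^{n-1}(\mathbb{R})\cap C^{\infty}(\mathbb{R}\setminus\{0\})\subseteq\Cn$. Since $a$ is positively homogeneous of degree $n$, an induction on the defining recursion \eqref{Eqn=DividedDifference} shows that $a^{[k]}$ is positively homogeneous of degree $n-k$ for $0\le k\le n$; in particular $a^{[n]}$ is positively homogeneous of degree $0$. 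As $a^{[n]}$ is continuous on $\mathbb{R}^{n+1}\setminus\{0\}$ (Preliminaries), it is bounded on the unit sphere, and degree-$0$ homogeneity together with the convention $a^{[n]}(0)=0$ then gives $\Vert a^{[n]}\Vert_\infty<\infty$.

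Next I would identify the auxiliary functions $a_k$ from \eqref{Eqn=Fl}. Inductively $a_{k+1}(t)=a_k^{[1]}(t,0)=|t|\,t^{n-2-k}$ for $0\le k\le n-2$, the value at $t=0$ being $a_k'(0)=0$ (this derivative exists since $n-1-k\ge 1$); thus $a_k(t)=|t|\,t^{n-1-k}$ for $0\le k\le n-1$, and in particular $a_{n-1}(t)=|t|$. Now fix $0\le k<n$ and a consummation $(q_1,\ldots,q_{n-k})$ of $(p_1,\ldots,p_n)$, so that $\sum_{j=1}^{n-k}q_j^{-1}=1$. On the interior of the positive orthant $a_k$ coincides with the monic polynomial $t\mapsto t^{n-k}$, whose $(n-k)$-th divided difference is the constant $1$; by continuity of $a_k^{[n-k,n-k]}$ on $\mathbb{R}^{n-k+1}\setminus\{0\}$ this value persists on the whole closed orthant minus the origin, while the origin is sent to $0$ by \eqref{Eqn=DividedDifference}. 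Hence $a_k^{[n-k,n-k]}=\chi_+-\chi_0$ on $\mathbb{R}_{\geq 0}^{n-k+1}$, where $\chi_+$ denotes the indicator of $\mathbb{R}_{\geq 0}^{n-k+1}$ and $\chi_0$ the indicator of the origin. Consequently Lemma~\ref{Lem=Indicator} (applied with $n-k$ factors) gives, for $A\ge 0$,
\[
T^{(A,\ldots,A)}_{a_k^{[n-k,n-k]}}=T^{(A,\ldots,A)}_{\chi_+-\chi_0}=T^{(A,\ldots,A)}_{\chi_+}-T^{(A,\ldots,A)}_{\chi_0},
\]
and both terms are contractions from $\cS_{q_1}\times\ldots\times\cS_{q_{n-k}}$ into $\cS_1$ by the H\"older inequality: the first is $(x_1,\ldots,x_{n-k})\mapsto x_1x_2\cdots x_{n-k}$ (as $\chi_{[0,\infty)}(A)=1$), and the second is $(x_1,\ldots,x_{n-k})\mapsto E_A(\{0\})x_1E_A(\{0\})\cdots E_A(\{0\})x_{n-k}E_A(\{0\})$. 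Since $\cS_1\hookrightarrow\cS_{1,\infty}$ contractively, this yields $M_{n-k}^+(a_k,q_1,\ldots,q_{n-k})\le2$; the identical computation on the negative orthant, where $a_k(t)=-t^{n-k}$, gives $M_{n-k}^-(a_k,q_1,\ldots,q_{n-k})\le2$. Taking suprema over $k$ and over consummations we obtain $L_n^+(a,p_1,\ldots,p_n)\le2$ and $L_n^-(a,p_1,\ldots,p_n)\le2$, and Theorem~\ref{Thm=Ultimate} completes the proof.

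The conceptual core --- and the only nontrivial step --- is the observation that on each orthant $a_k$ becomes a homogeneous polynomial of exactly the critical degree $n-k$, so its top-order divided difference degenerates to a constant and the corresponding multiple operator integral is tamed by a bare H\"older estimate (landing even in $\cS_1$); all genuinely weak-type behaviour has been pushed into Theorem~\ref{Thm=Ultimate}. The remaining points are routine: the elementary derivative computation establishing $a\in\Cn$ and $\Vert a^{[n]}\Vert_\infty<\infty$, and keeping careful track of the correction term $\chi_0$ and of the fact that $\sum_j q_j^{-1}=1$ for every consummation that occurs, which is precisely what makes the H\"older bound land in $\cS_1$.
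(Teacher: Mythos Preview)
Your proof is correct and follows essentially the same route as the paper: reduce to Theorem~\ref{Thm=Ultimate}, observe that on each closed half-line $a_k$ coincides with $\pm t^{n-k}$ so that $a_k^{[n-k]}$ is constant on the corresponding orthant, and finish with Lemma~\ref{Lem=Indicator} plus H\"older. The only differences are cosmetic --- you compute $a_k(t)=|t|t^{n-1-k}$ directly rather than via Lemma~\ref{Lem=PostCrucial}, and you obtain the sharp constant $1$ for the divided difference of $t^{n-k}$ (the paper writes $n!$, which is a harmless slip); you also make explicit the homogeneity argument for $\Vert a^{[n]}\Vert_\infty<\infty$, which the paper leaves implicit.
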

\begin{proof}
Let $\epsilon = \pm 1$. Set $b(t) = t^n$. Then $a(t) = \epsilon b(t)$ for every $t \in \mathbb{R}$ with $\epsilon t \geq 0$. Consequently, $a^{[n]}(t) = \epsilon b^{[n]}(t)$ for every $t \in \epsilon \cdot \mathbb{R}^{n+1}_{\geq 0}$. Recall that $a_k$ and $b_k$ are defined in \eqref{Eqn=Fl} with $f$ replaced by $a$ and $b$ respectively.
 So  by Lemma \ref{Lem=PostCrucial} we certainly have
\begin{equation}\label{Eqn=AisB}
a_k^{[n-k]}(t) = \epsilon b_k^{[n-k]}(t), \qquad    t \in \epsilon \cdot \mathbb{R}^{n-k+1}_{\geq 0}, 0 \leq k <  n.
\end{equation}
 Further, for the $n$-th order derivative we have $b^{(n)}(t) =  n!$ for $t \in \mathbb{R}$. By the integral expression for divided differences \cite[Lemma 5.1]{PSS} we conclude that
 $b^{[n]}(t) =  n!$ for $t \in \mathbb{R}^{n+1} \backslash \{ 0 \}$ and so by \eqref{Eqn=AisB} we find  $a^{[n]}(t) = \epsilon n!$ for all $t \in \epsilon \cdot \mathbb{R}^{n+1}_{\geq 0} \backslash \{0 \}$.  So by Lemma \ref{Lem=PostCrucial} we certainly have,
 \[
 a_k^{[n-k]}(t) = \epsilon n!, \qquad  t \in  \epsilon \cdot  \mathbb{R}^{n-k+1}_{\geq 0} \backslash \{0 \} , 0 \leq k <  n.
 \]
 Let $\chi_{\epsilon, k}$ be the indicator function of $\epsilon \cdot \mathbb{R}^{n-k+1}_{\geq 0}$.
   Then $\chi_{\epsilon, k} a^{[n-k]}_k = \epsilon n! ( \chi_{\epsilon, k }   - \chi_{ 0 } )$.  We conclude from Lemma \ref{Lem=Indicator} that for $\epsilon A \geq 0$,
 \[
  T^{(A, \ldots, A)}_{    a_k^{[n-k]}} =
 T^{(A, \ldots, A)}_{  \chi_{\epsilon, k  }   a_k^{[n-k]}} =
\epsilon n! ( T^{(A, \ldots, A) }_{ \chi_{ \epsilon, k   }  } -  T^{ (A, \ldots, A) }_{ \chi_{ 0 } }).
 \]
 By Lemma \ref{Lem=Indicator}
 the  multiple operator integrals  $T^{(A, \ldots, A) }_{ \chi_{ \epsilon  , k   }  }$ and $T^{ (A, \ldots, A) }_{ \chi_{ 0 } }$ are contractions $\cS_{r_1} \times \ldots \times \cS_{r_{n-k}} \rightarrow \cS_{1}$ for any   $(r_1, \ldots, r_{n-k})$ with $\sum_{l=1}^{n-k} \frac{1}{r_l} = 1$. So certainly they are contractions $\cS_{r_1} \times \ldots \times \cS_{r_{n-k}} \rightarrow \cS_{1, \infty}$.  We conclude that
 \[
\begin{split}
L_n^{\epsilon}(a, p_1, \ldots, p_n) < 2 n!.
\end{split}
\]
 Hence we conclude the theorem from  Theorem \ref{Thm=Ultimate}.
 \end{proof}

 \begin{thm}\label{Thm=AbsValuePlus}
Let $g \in C^{n+1}(\mathbb{R})$ be such that $g(t) = \vert t \vert t^{n-1}, t \in \mathbb{R} \backslash [-1,1]$. Fix $1<p_1, \ldots, p_n< \infty$ such that $\sum_{l=1}^n \frac{1}{p_l} = 1$. Then there exists a constant $D >0$ such that for  every $\boldA \in B(H)_{sa}^{\times n +1}$ we have that
\begin{equation}
\Vert T^{\boldA}_{g^{[n]}}: \cS_{p_1} \times \ldots \times \cS_{p_n} \rightarrow \cS_{1, \infty}\Vert \leq D.
\end{equation}
\end{thm}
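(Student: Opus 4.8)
The plan is to apply Theorem~\ref{Thm=Ultimate} to $g$ itself; the only work is to bound the quantities $L_n^{+}(g,p_1,\ldots,p_n)$ and $L_n^{-}(g,p_1,\ldots,p_n)$, and I would do this with the help of Proposition~\ref{Prop=EasyClass}. Note $g\in C^{n+1}(\mathbb{R})\subseteq\Cn$; moreover $g^{[n]}$ is bounded, since $g^{(n)}$ is continuous and coincides with $\pm n!$ outside $[-1,1]$, hence is bounded, and divided differences of order $n$ are dominated in absolute value by $\Vert g^{(n)}\Vert_\infty$. So by \eqref{Eqn=MainEstimate} it is enough to show $L_n^{+}(g,p_1,\ldots,p_n)+L_n^{-}(g,p_1,\ldots,p_n)<\infty$.

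First I would reduce to the case $g^{(j)}(0)=0$ for $0\leq j\leq n-2$. Choosing $\phi\in C^\infty(\mathbb{R})$ supported in $(-1,1)$ with $\phi\equiv1$ near $0$ and setting $\eta(t)=\phi(t)\sum_{j=0}^{n-2}\frac{g^{(j)}(0)}{j!}t^{j}$, the function $\eta$ is $C^\infty$ with compact support in $(-1,1)$, has $\eta^{(j)}(0)=g^{(j)}(0)$ for $j\leq n-2$, and $g-\eta$ again lies in $C^{n+1}(\mathbb{R})$, equals $|t|t^{n-1}$ outside $[-1,1]$, and has vanishing derivatives of order $\leq n-2$ at $0$. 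Since $T^{\boldA}_{\eta^{[n]}}=T^{\boldA}_{\eta^{[n,n]}}$ is bounded from $\cS_{p_1}\times\ldots\times\cS_{p_n}$ to $\cS_1\subseteq\cS_{1,\infty}$ uniformly in $\boldA$ by Proposition~\ref{Prop=EasyClass}, it suffices to prove the theorem for $g-\eta$. So from now on I assume $g^{(j)}(0)=0$ for $0\leq j\leq n-2$; writing $a(t)=|t|t^{n-1}$ and $h:=g-a$, this also forces $h^{(j)}(0)=0$ for $0\leq j\leq n-2$, because $a^{(j)}(0)=0$ for $j\leq n-1$.

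Next I would bound $L_n^{+}(g,p_1,\ldots,p_n)$, the argument for $L_n^{-}$ being symmetric. Fix $0\leq k<n$ and a consummation $(q_1,\ldots,q_{n-k})$ of $(p_1,\ldots,p_n)$; the aim is to bound $\Vert T^{(A,\ldots,A)}_{g_k^{[n-k]}}:\cS_{q_1}\times\ldots\times\cS_{q_{n-k}}\to\cS_{1,\infty}\Vert$ uniformly for $A\geq0$. As in the proof of Theorem~\ref{Thm=AbsValue}, for $A\geq0$ this operator integral depends only on the restriction of $g_k^{[n-k]}$ to $\mathbb{R}_{\geq0}^{\times(n-k+1)}$ (Lemma~\ref{Lem=Indicator}\eqref{Item=One}), where $g_k=a_k+h_k$; there $a_k$ equals $t\mapsto t^{n-k}$, so $a_k^{[n-k]}\equiv(n-k)!$, contributing (up to sign) a constant multiple of $T^{(A,\ldots,A)}_{\chi_+}$, which is a contraction into $\cS_1$ (Lemma~\ref{Lem=Indicator}\eqref{Item=Two}). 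The genuinely new point is the $h_k$-term. Since $g=a$ outside $[-1,1]$, the restriction $h|_{[0,\infty)}=g|_{[0,\infty)}-(t\mapsto t^{n})$ lies in $C^{n+1}([0,\infty))$ and is supported in $[0,1]$; iterating $f\mapsto f^{[1]}(\cdot,0)$ and using $h^{(j)}(0)=0$ for $j\leq n-2$, one checks that $h_k|_{[0,\infty)}$ stays supported in $[0,1]$ and lies in $C^{(n-k)+1}([0,\infty))$ for every $0\leq k\leq n-1$. Extending $h_k|_{[0,\infty)}$ to a compactly supported $H_k\in C^{(n-k)+1}(\mathbb{R})$ and using that divided differences of order $m$ on a product $S^{m+1}$ depend only on the restriction to $S$, one gets $H_k^{[n-k,n-k]}=h_k^{[n-k]}$ on $\mathbb{R}_{\geq0}^{\times(n-k+1)}$, hence $T^{(A,\ldots,A)}_{h_k^{[n-k]}}=T^{(A,\ldots,A)}_{H_k^{[n-k,n-k]}}$ for $A\geq0$ by Lemma~\ref{Lem=Indicator}\eqref{Item=One}; Proposition~\ref{Prop=EasyClass} applied with $n$ replaced by $n-k$ to $H_k$ then bounds this uniformly from $\cS_{q_1}\times\ldots\times\cS_{q_{n-k}}$ to $\cS_1$. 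Combining, $M_{n-k}^{+}(g_k,q_1,\ldots,q_{n-k})<\infty$, with a bound uniform over $k$ and over consummations, so $L_n^{+}(g,p_1,\ldots,p_n)<\infty$, and similarly $L_n^{-}(g,p_1,\ldots,p_n)<\infty$.

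Feeding these bounds and the boundedness of $g^{[n]}$ into Theorem~\ref{Thm=Ultimate} produces a constant $D=D(p_1,\ldots,p_n)$ with $\Vert T^{\boldA}_{g^{[n]}}:\cS_{p_1}\times\ldots\times\cS_{p_n}\to\cS_{1,\infty}\Vert\leq D$ for all $\boldA$, which is the assertion. I expect the main obstacle to be the persistence of compact support of $h_k|_{[0,\infty)}$: without the preliminary normalization the functions $h_k$ carry tails of the shape $-h_{k-1}(0)/t$ on $(1,\infty)$ that are not compactly supported and fall outside the reach of Proposition~\ref{Prop=EasyClass}, so getting the reduction to $g^{(j)}(0)=0$ $(j\leq n-2)$ right, and matching exactly the smoothness order $C^{(n-k)+1}$ that Proposition~\ref{Prop=EasyClass} requires at each stage, is where care is needed.
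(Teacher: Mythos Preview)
Your proof is correct and follows the same route as the paper: write $g=a+h$ with $a(t)=|t|t^{n-1}$, invoke Theorem~\ref{Thm=AbsValue} for $a$, bound $L_n^\pm$ for the compactly supported remainder via Proposition~\ref{Prop=EasyClass}, and feed everything into Theorem~\ref{Thm=Ultimate}. The only organisational difference is how the half-line extension is arranged. You first normalise so that $h^{(j)}(0)=0$ for $j\le n-2$, which guarantees that each $h_k|_{[0,\infty)}$ stays compactly supported in $[0,1]$, and then extend each $h_k|_{[0,\infty)}$ separately to a global $H_k\in C^{(n-k)+1}(\mathbb{R})$ with compact support. The paper instead performs a \emph{single} extension at level $k=0$: it sets $c_\epsilon(t)=B(t)\bigl(g(t)-\epsilon t^n\bigr)$ for a smooth cutoff $B$ equal to $1$ on $\epsilon[0,1]$, so that $c_\epsilon\in C^{n+1}(\mathbb{R})$ is compactly supported and agrees with $h$ on $\epsilon\mathbb{R}_{\ge 0}$, and then Lemma~\ref{Lem=PostCrucial} automatically carries the identity $c_\epsilon=h$ on $\epsilon\mathbb{R}_{\ge 0}$ down to $(c_\epsilon)_k=h_k$ on $\epsilon\mathbb{R}_{\ge 0}$ for every $k$, bypassing any condition on the jets of $h$ at $0$. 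Your normalisation step is therefore not strictly necessary, but it does no harm and has the virtue of making each invocation of Proposition~\ref{Prop=EasyClass} at level $k$ fit its compact-support hypothesis verbatim.
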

\begin{proof}
Set $c = g - a$  where $a$ is defined in Theorem \ref{Thm=AbsValue}.
From the definition of divided differences $c^{[n]} = g^{[n]} - a^{[n]}$. By Theorem \ref{Thm=AbsValue} and the quasi-triangle inequality it  suffices to show that $T_{c^{[n]}}^{\boldA}$ is bounded $\cS_{p_1} \times \ldots \times \cS_{p_n} \rightarrow \cS_{1, \infty}$ uniformly in $\boldA \in B(H)^{\times n+1}_{sa}$.

Note that by the assumptions $c$ is supported on $[-1, 1]$.  Let $\epsilon = \pm 1$. Let $B: \mathbb{R} \rightarrow [0,1]$ be a smooth compactly supported function that is 1 on the interval $\epsilon [0,1]$. Set
\[
c_\epsilon(t) = B(t)(g(t) - \epsilon t^n), \qquad t \in \mathbb{R}.
\]
 Then $c_\epsilon$ is a compactly supported $C^{n+1}$-function and $c_\epsilon(t) = c(t), t\in \epsilon \mathbb{R}_{\geq 0}$.   Therefore
 for all  $t \in \epsilon \cdot  \mathbb{R}_{\geq 0}^{n+1}$ we have
 $c_\epsilon^{[n]}(t) = c^{[n]}(t)$. By Lemma \ref{Lem=PostCrucial}  for all  $t \in \epsilon \cdot  \mathbb{R}_{\geq 0}^{n-k+1}$ we have
 $c_{\epsilon,k}^{[n-k]}(t) = c^{[n-k]}_k(t)$.
  In other words, if  $\chi_{\epsilon,k}$ is the indicator function on $\epsilon \cdot \mathbb{R}_{\geq 0}^{n-k+1}$,
 \begin{equation}\label{Eqn=DomainEq}
c_{\epsilon,k}^{[n-k]} \chi_{\epsilon, k}  = c^{[n-k]}_k \chi_{\epsilon,k}.
 \end{equation}
 By Proposition  \ref{Prop=EasyClass} there  exists a constant $D>0$ such that for all $A \in B(H)_{sa}$ we have
\begin{equation}\label{Eqn=Tbep}
\Vert T_{c_{\epsilon,k}^{[n-k]}}^{(A, \ldots, A)}: \cS_{p_1} \times \ldots \times \cS_{p_n} \rightarrow \cS_1 \Vert \leq D.
\end{equation}
We see by \eqref{Eqn=DomainEq},  \eqref{Eqn=Tbep} and Lemma \ref{Lem=Indicator} that for all $A \in B(H)_{sa}$ with $\epsilon A \geq 0$,
\[
\Vert  T_{c^{[n-k]  }_k  }^{(A, \ldots, A) } =  T_{c^{[n-k]  }_k \chi_{\epsilon,k} }^{(A, \ldots, A) }   =  T_{  c_{\epsilon,k}^{[n-k]}  \chi_{\epsilon,k} }^{(A, \ldots, A)}: \cS_{p_1} \times \ldots \times \cS_{p_n} \rightarrow \cS_1 \Vert \leq D.
\]
 So in Theorem \ref{Thm=Ultimate}
we have that $L_n^{\epsilon}(c, p_1, \ldots, p_n) < \infty$   and so by the same Theorem \ref{Thm=Ultimate} we conclude the proof.
\end{proof}

\begin{rmk}\label{Rmk=Final}
In \cite[Lemma 28]{CLPST}, see in particular the line after equation (37) of \cite{CLPST},  the following result was proved and is a key step in the resolution of Peller's problem as stated in \cite{CLPST}. Let $n=2$ and let  $g: \mathbb{R} \rightarrow \mathbb{C}$ be a function as in the statement of Theorem \ref{Thm=AbsValuePlus} with the additional assumption that $g(0) = g'(0) = g''(0) =0$. There exists  no constant $0 < D < \infty$ such that for all $\boldA \in B(H)_{sa}^{\times 3}$ we have
\[
\Vert T_{g}^{\boldA}: \cS_{2} \times \cS_2 \rightarrow \cS_1 \Vert \leq D.
\]
  This shows that Theorem \ref{Thm=AbsValuePlus} is optimal.
\end{rmk}

{\bf Acknowledgement:} the authors are grateful to the referee for detailed comments which helped to improve the exposition.

\end{document}